\newtheorem{definition}{Definition}[section]
\newtheorem{theorem}{Theorem}[section]
\newtheorem{proposition}{Proposition}[section]
\newtheorem{corollary}{Corollary}[section]
\newtheorem{remark}{Remark}[section]
\newtheorem{example}{Example}[section]
\begin{document}

\title{The Jones polynomial in systems with Periodic Boundary Conditions}

\author{Kasturi Barkataki \and Eleni Panagiotou}

\date{\today}

\maketitle

\begin{abstract}
Entanglement of collections of  filaments arises in many contexts, such as in polymer melts, textiles and crystals. Such systems are modeled using periodic boundary conditions (PBC), which create an infinite periodic system whose global entanglement may be impossible to capture and is repetitive.  We introduce two new methods to assess topological entanglement in PBC: the Periodic Jones polynomial and the Cell Jones polynomial. These tools capture the grain of entanglement in a periodic system of open or closed chains, by using a finite link as a representative of the global system. These polynomials are topological invariants in some cases, but in general are sensitive to both the topology and the geometry of physical systems. For a general system of 1 closed chain in 1 PBC, we prove that the Periodic Jones polynomial is a recurring factor, up to a remainder, of the Jones polynomial of a conveniently chosen finite cutoff of arbitrary size of the infinite periodic system. We apply the Cell Jones polynomial and the Periodic Jones polynomial to physical PBC systems such as 3D realizations of textile motifs and polymer melts of linear chains obtained from molecular dynamics simulations.  Our results demonstrate that the Cell Jones polynomial and the Periodic Jones polynomial can measure collective entanglement complexity in such systems of physical relevance. 
\vspace{2pc}

\noindent{\it Keywords}: {\small topology, knots, open knots, periodic systems, Jones polynomial, entanglement, polymers, textiles. } 

\noindent{\it PACS}: {\small $02.40.-k$, $02.40.Sf$, $02.70.Ns$, $36.20.-r$, $36.20.Hb$, $81.90.+c$, $83.10.Kn$, $83.10.Mj$, $87.14.Ee$ },
\noindent{\it MSC}: {\small $57M25$ }
\end{abstract}
%
%
%
\section{Introduction}

Many physical systems, such as polymers, textiles, and crystals are composed of filamentous structures, whose entanglement complexity largely determines their mechanical properties and function \cite{Arsuaga2005,Edwards1967,Liu2018,Panagiotou2019,Qin2011,Sulkowska2012,Taylor1974,Li2019,Zhang2022}. The constituent filaments in these systems can be of varying architecture, such as ring or linear, which can be represented by closed or open curves in 3-space, respectively. Even though entanglement of closed curves in 3-space is well defined in knot theory, the necessary framework to rigorously measure entanglement of linear chains (without any approximation schemes) was discovered only recently 
\cite{Panagiotou2015,Panagiotou2021,Panagiotou2020b,Wang2022,Panagiotou2011}.  In practice however, many physically relevant systems of multiple filaments in 3-space are modeled using Periodic Boundary Conditions (PBC). Measuring the entanglement in systems employing PBC is considerably more complex, since the generated system is infinite. In this paper, we introduce the Periodic Jones polynomial that can measure entanglement in systems of open and/or closed chains in PBC.

It is natural to look for measures of entanglement of curves in three-dimensional space in the theory of knots and links \cite{knot-book,Kauffman2001}. A knot (or link) is one (or more) simple closed curve(s) in space. Knots
and links are classified with respect to their complexity by topological invariants, usually of the form of integer valued functions or polynomials with integer coefficients \cite{Freyd1985,Kauffman1990,Przytycki1987}. These measures are invariant under continuous deformations of the chains that do not allow self intersections. Although open curves are not knotted or linked in the topological sense, they can form complex conformations, which we call entangled. Entanglement of open curves can also  be measured rigorously by topological/geometrical measures, which are either real numbers or polynomials with real coefficients, that are continuous functions of the curve coordinates and can detect knotting and threading \cite{Barkataki2022, Gauss1877, Panagiotou2020b, Panagiotou2021}.

Periodic boundary conditions are often employed to model physical systems of filaments in order to avoid boundary effects. The entanglement in such systems has several characteristics that make it more difficult to quantify \cite{Morton2009, Qin2011, Fukuda2023, Fukuda2022,Kolbe2022, Panagiotou2015, Castle2011, Delgado2017, Evans2013b, Evans2015a, Evans2013a, Evans2015b, Markande2020, Knittel2020, Wadekar2020, Wadekar2021, OKeeffe2021, OKeeffe2022, rosi2005rod}. A system with PBC is created by infinite copies of a base
cell which creates an infinite system whose collective entanglement is impossible to compute as a whole.  One may focus on the entanglement of the arcs present inside one single cell to assess the entanglement of the periodic system, but this poses several difficulties. First, the arcs composing a cell are open mathematical curves and measuring the multi-chain complexity of a collection of open curves in 3-space became possible only recently \cite{Barkataki2022}. Second, focusing only on one cell can lead to missing important topological information that can be seen only by accounting for its periodic translations which may create longer connected components with higher topological complexity. To address the periodicity, prior studies have proposed to work instead in a topological identification space (for example a solid torus, thickened torus or the 3-torus). However, these methods are not sensitive on the geometry of a system since they rely on the notion of topological equivalence (which allows deformations) and are thus are not well defined for open (finite linear) curves. Moreover,  even in the case of closed or infinite filaments, the geometry may be of interest \cite{Wadekar2020,Wadekar2021}. The periodic linking number was introduced in \cite{Panagiotou2015}, as a method to capture pairwise entanglement in periodic systems of chains of both linear and ring architecture, and it is successfully applied to periodic systems to analyze their pairwise entanglement \cite{Panagiotou2014,Panagiotou2011,P2018,Millett2016,Panagiotou2013,Panagiotou2019,Igram2016}. The periodic linking number is successful in measuring pairwise entanglement in PBC, of both closed and open curves, because, it was based on defining linking directly in the continuum periodic system, without reference to the base space. Extending this approach to other measures of higher order topological entanglement, like the Jones polynomial, is not obvious, since it is difficult to decouple or estimate the contributions of any single finite link in the system to the Jones polynomial of the whole system. So far, it has been possible to define the Jones polynomial for systems of closed curves in 1 and 2 PBC using identification spaces \cite{Qin2011,Morton2009}. In this manuscript, we define the Jones polynomial for systems of open (and closed) chains in PBC for the first time. We provide two definitions of the Jones polynomial of the periodic system - the Cell Jones polynomial, which captures the multi-chain complexity of arcs in a base cell using \cite{Barkataki2022}, and the Periodic Jones polynomial, which captures the grain of the global complexity of the infinite system. These polynomials apply to both open and closed curves in 1, 2 or 3 PBC. The definition of the Periodic Jones polynomial relies on extracting a finite grain of entanglement from the infinite periodic system that is minimal but accounts for the multi-chain complexity that is periodically repeated, while taking into account entire unfoldings of chains instead of fragments of chains. The Periodic Jones polynomial is related to the classical Jones polynomial of the infinite system, in the only meaningful sense, that is, the Jones polynomial of a cutoff of arbitrary size. More precisely, for a general system of 1 closed chain in 1 PBC,  we prove that the Periodic Jones polynomial is a recurring factor, up to a remainder, of the Jones polynomial of any $N^{th}$ (appropriately chosen) cutoff of the infinite system for any $N$. We prove that the Periodic Jones polynomial and the Periodic Linking Number are sufficient to fully describe the contribution of a particular state of the Jones polynomial of the $N^{th}$ cutoff of the infinite system for any $N$. The use of these new polynomials is demonstrated by applying them to polymeric systems of linear chains, obtained from Molecular Dynamics Simulations, of varying molecular weight and to doubly-periodic systems such as 3D realizations of textile motifs. Our results show that these tools can classify and compare among the complexity of different systems.

The paper is organized as follows: Section \ref{sec_prelim_defs} presents the definitions of measures of entanglement of collections of curves in 3-space, that are useful in this study. Section \ref{sec_PBC} presents necessary definitions for studying entanglement in PBC and introduces the Cell Jones polynomial and the Periodic Jones polynomial and discusses their properties. Section \ref{sec_app} presents results on the application of the Cell and Periodic Jones polynomials to multi-chain systems in PBC such as, 3D realizations of textile motifs and  polymer melts of linear chains. Section \ref{sec_mpl_inf} discusses the relation of the Periodic Jones polynomial and the Jones polynomial of the  infinite periodic system. Finally, Section \ref{sec_conc} presents the conclusions of this study.
\section{The Jones polynomial of open and closed curves in 3-space}\label{sec_prelim_defs}

In this section, we discuss the Jones polynomial for open and closed curves in 3-space which was introduced in \cite{Barkataki2022,Panagiotou2020}. The Jones polynomial of a collection of open curves, $l$, is defined with respect to projections of $l$, which we call diagrams, and it is the average of the Jones polynomial in a projection over all projections.

\begin{definition}(Jones Polynomial of open or closed chains in 3-space) Let $l$ denote a collection of open or closed oriented curves in 3-space. The Jones polynomial of $l$ is defined as,
\begin{equation}\displaystyle
f_{K}(l)=\frac{1}{4\pi}\int_{\vec{\xi}\in S^2}\left(-A^3\right)^{-\mathsf{Wr}\left(l_{\vec{\xi}}\right)}\left \langle (l)_{\vec{\xi}}\right \rangle dS.
\label{Jones_OC}
\end{equation}

\noindent where the integral is over all vectors in $S^2$ except a set of measure zero (corresponding to non-generic projections), where  $ (l)_{\vec{\xi}}$ is a diagram obtained from the projection of $l$ on the plane with normal vector ${\vec{\xi}}$, whose Writhe is given by $\mathsf{Wr}(l_{\vec{\xi}})$ (defined below) and bracket polynomial is given by $\left\langle (l)_{\vec{\xi}}\right\rangle$ (defined below) \cite{Barkataki2022}.
\label{jones-open-def}
\end{definition}

 Notice that $l_{\vec{\xi}}$ can be a closed or open knot or link diagram. When it is an open knot or link, it is called a knotoid or linkoid, respectively. The Writhe of a diagram and the bracket polynomial for a diagram are defined as follows:

\begin{definition}(Writhe)
Let $(l)_{\vec{\xi}}$ denote the diagram of a collection of oriented open curves. 
Each crossing in a projection is of one of the two types shown in Figure \ref{diagcross1}, associated with a positive and a negative sign, respectively. The \textit{writhe} of $(l)_{\vec{\xi}}$ is defined as the algebraic sum of crossings in the diagram, namely:
\begin{equation}
\mathsf{Wr}(l_{\vec{\xi}})=\sum_{\text{crossings in diagram}}\text{sign of crossing}.
\end{equation}
\end{definition}

\begin{figure}[ht!]
    \centering
    \includegraphics[scale=0.15]{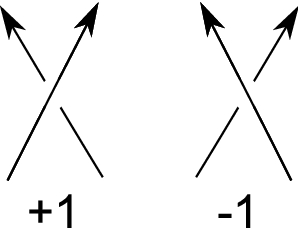}
    \caption{The possible signs at any crossing in an oriented diagram of a knot/link or a knotoid/linkoid (open knot/link diagrams).}
    \label{diagcross1}
\end{figure}

\begin{remark}
    In Section \ref{sec_mpl_inf} we will also refer to the diagrammatic (periodic \cite{Panagiotou2015}) linking number between links as the half algebraic sum of inter (shared) crossings between two links. 
\end{remark}

\begin{definition}(Bracket polynomial of an open link diagram (linkoid))
Let $(l)_{\vec{\xi}}$ denote the diagram of $n$ components. The bracket polynomial is completely characterised by the following Skein relation and initial conditions:
\begin{equation}
\begin{split}
\left\langle\raisebox{-9pt}{\includegraphics[width=.05\linewidth]{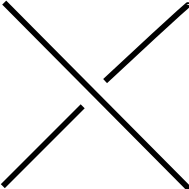}}\right\rangle=A\left\langle\raisebox{-9pt}{\includegraphics[width=.05\linewidth]{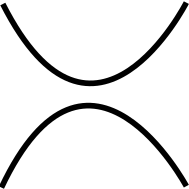}}\right\rangle+A^{-1}\left\langle\raisebox{-9pt}{\includegraphics[width=.05\linewidth]{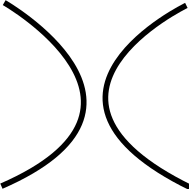}}\right\rangle, & \hspace{0.5cm}\left\langle L\cup \bigcirc\right\rangle=d\left\langle L\right\rangle,\hspace{0.5cm}\left\langle \raisebox{-12pt}{\includegraphics[width=.12\linewidth]{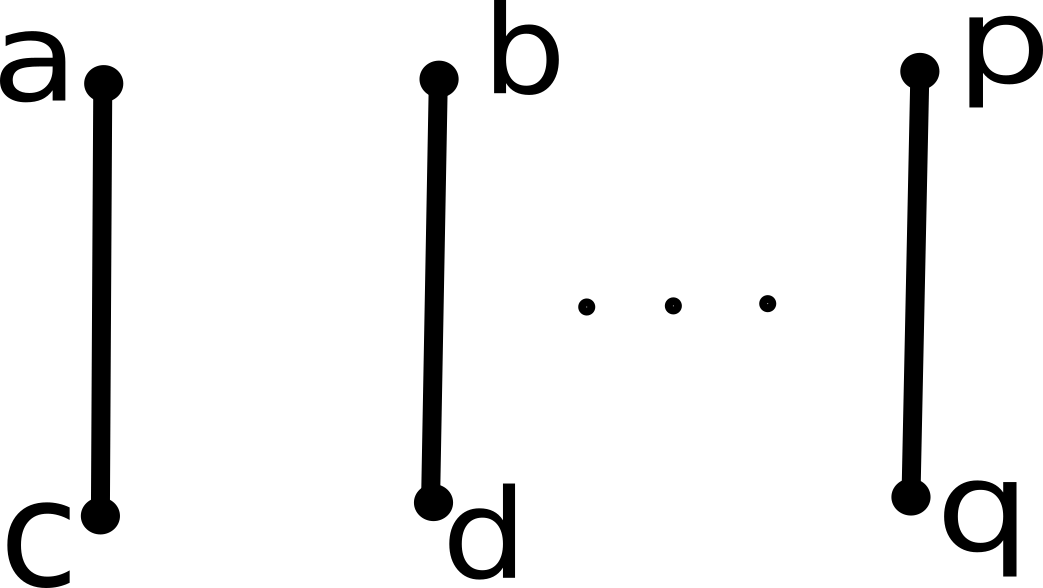}}\right\rangle=d^{|cyc|},
\label{bkt_sk_L}
\end{split}
\end{equation}
where $d=-A^2-A^{-2}$, $L$ denotes any linkoid, $|cyc|$ denotes the number of distinct segment cycles corresponding to a final state of the bracket state sum expansion \cite{Barkataki2022}, and where $A=t^{-1/4}$. For closed curves, where the bracket in the last equation contains $n$ disjoint circles, $|cyc|=n-1$. 
\end{definition}
The Jones polynomial of collections of open or closed curves in 3-space has the following properties \cite{Barkataki2022}:
\begin{enumerate}
    \item It does not depend on any particular direction of projection of the collection of open or closed curves.
    \item For a collection of open curves in 3-space, it is not the same as the Jones polynomial of a corresponding/approximating
link, nor that of a corresponding/approximating linkoid.
    \item For a collection of closed curves in $3$-space, the Jones polynomial in Equation \ref{Jones_OC} is same as the classical Jones polynomial and it can be computed from a single projection, i.e. $ f_{\mathcal{L}}=f_{\mathcal{L}_{\vec{\xi}}}$ where, $\vec{\xi} \in S^2$ is any projection vector.
    \item The Jones polynomial of a collection of open curves in 3-space has real coefficients. It is not a topological invariant, but it is a continuous function of the curve coordinates.
    \item As the endpoints of a collection of open curves in 3-space tend to coincide for each component, the Jones polynomial tends to that of the corresponding link.
\end{enumerate}
\section{Entanglement in PBC systems and the Periodic Jones polynomial}
\label{sec_PBC}

Systems in PBC may comprise of open, closed or infinite chains (see Figure \ref{3_pbc_illus}). In Section \ref{PBC_termino}, we describe systems employing PBC along with some useful terminologies. In Section \ref{Jones_cell}, the Cell Jones polynomial is introduced to study the local complexity of a PBC system. In Section \ref{Jones_periodic}, the Periodic Jones polynomial is introduced as a measure of global entanglement of the PBC system. Lastly, in Section \ref{modulo}, a normalization scheme for the Jones polynomial is discussed in the context of systems employing PBC.
\begin{figure}[ht!]
    \centering
    \includegraphics[scale=0.03]{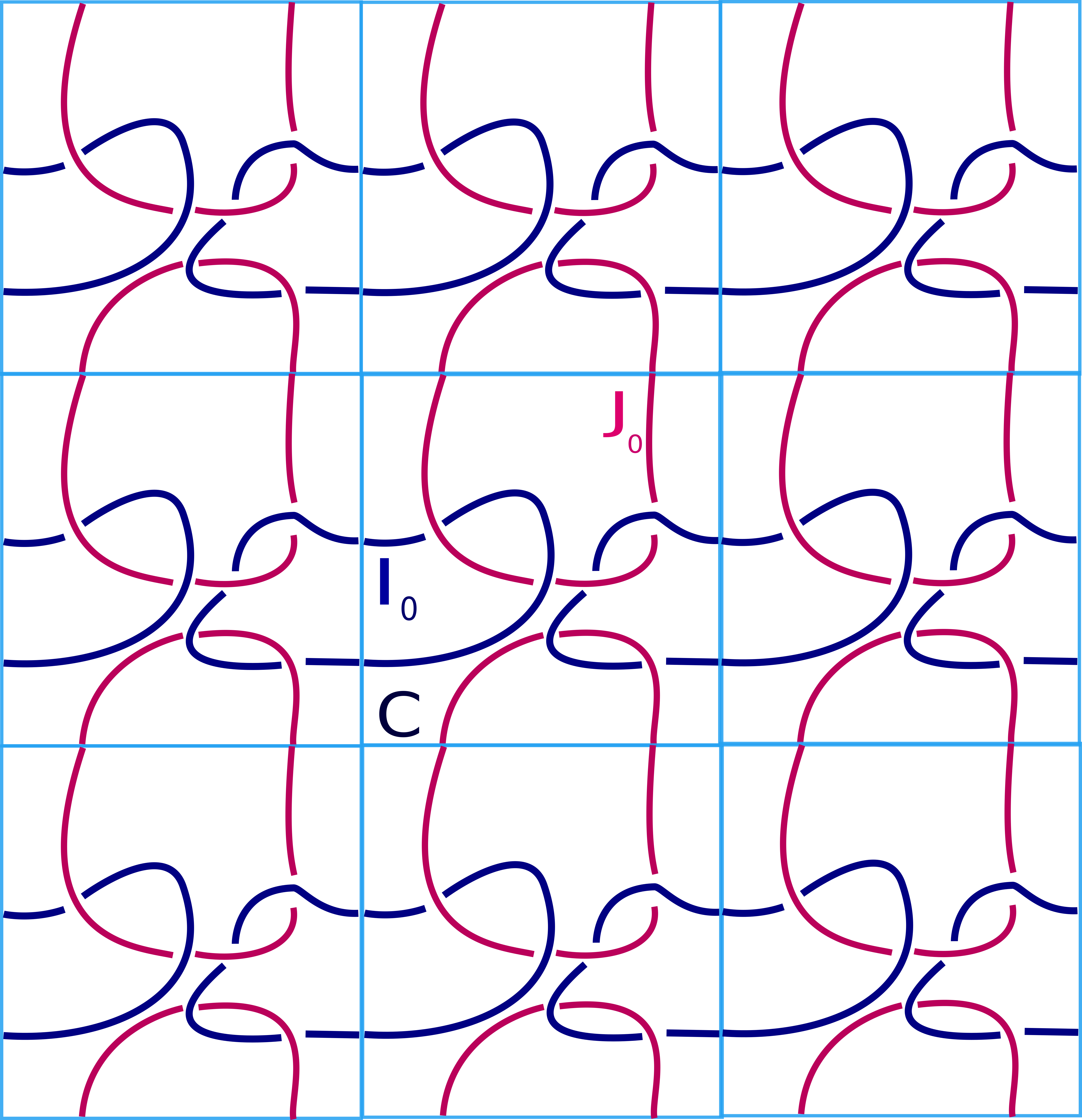} \quad
    \includegraphics[scale=0.03]{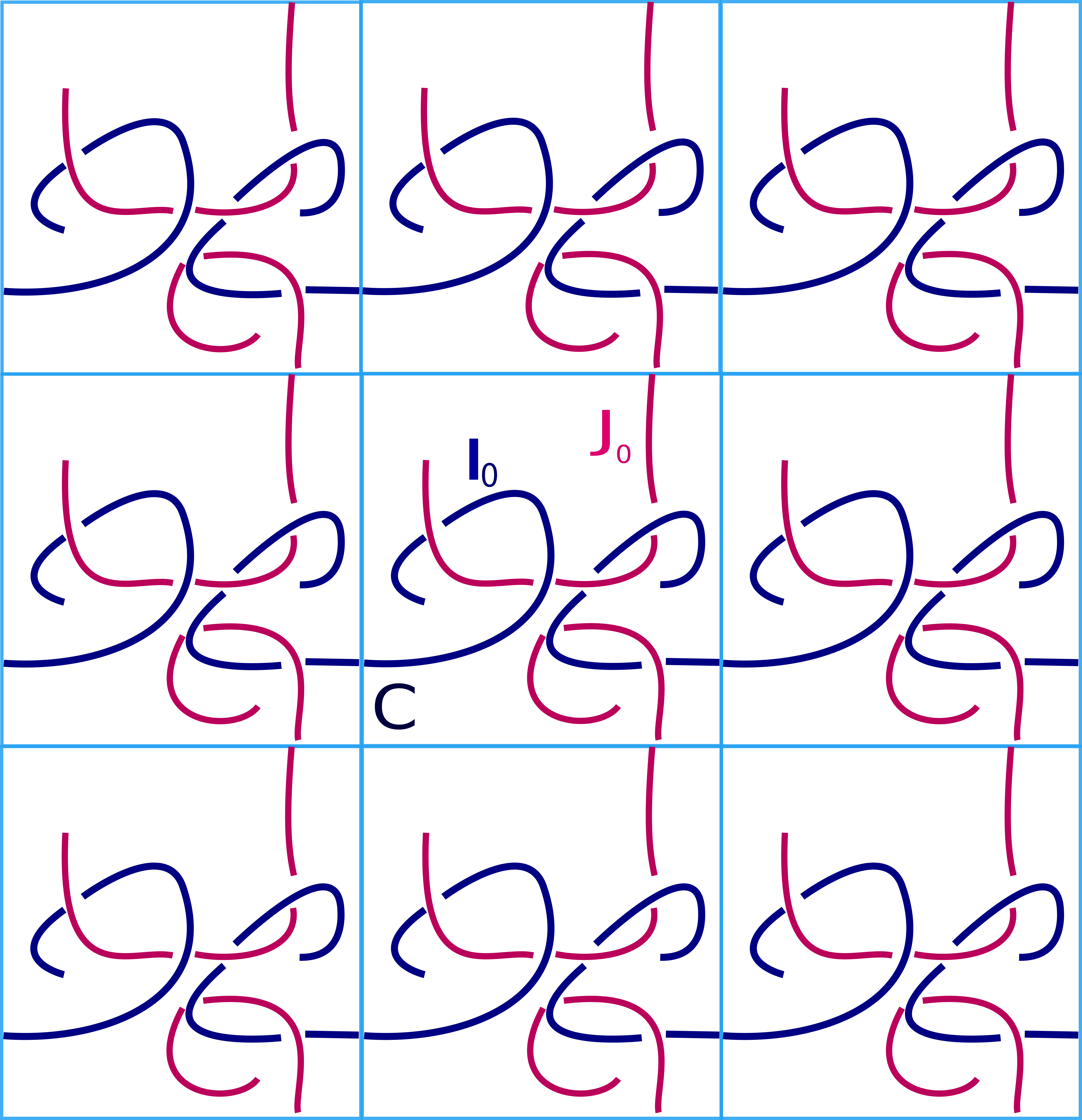} \quad
    \includegraphics[scale=0.03]{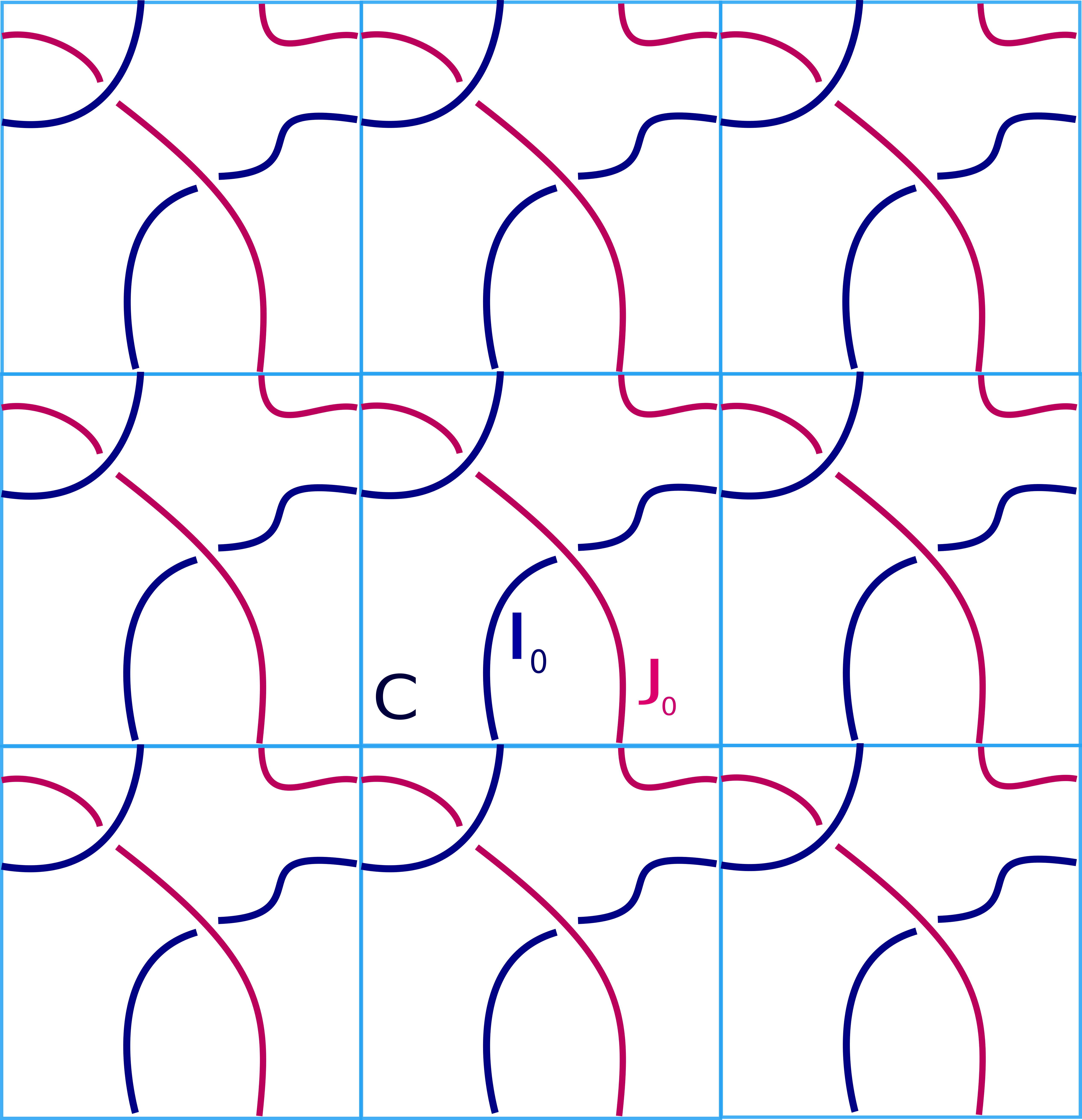}
    \caption{Left : A system in 2 PBC with 2 closed generating chains. Center : A system in 2 PBC with 2 open generating chains. Right :  A system in 2 PBC with 2 infinite generating chains.}
    \label{3_pbc_illus}
\end{figure}

\subsection{Systems with periodic boundary conditions}\label{PBC_termino}
The following definitions are similar to those in \cite{Panagiotou2015} and they help study entanglement in PBC.
\begin{definition}
A cell consists of a cube with embedded arcs (i.e. parts of curves) which satisfy the PBC requirement. We define a system employing PBC as a 3D system generated by tiling identical cubic cells (or parallelepipeds) of volume $xyz$, where $x,y$ and $z$ denote length, breadth and height of the cell, respectively. A generating chain, say $i$, is the union of all the arcs inside the generating cell, the translations of which define a connected component in the periodic system. A base point is defined for every generating chain and it is a point in one of its arcs. The corresponding free chain, denoted $I$, is defined as the collection of all translations of the generating chain $i$. When $I$ consists of a collection of closed/open/infinite curves, it is called a closed/open/infinite free chain, respectively. An image of the free chain $I$, we denote $I_k$, is any connected component in the periodic system that consists of a single translation of each of the arcs of the generating chain $i$. Therefore, each image contains only one base point. The image, $I_0$, of a free chain $I$, whose base point lies in the generating cell is called the parent image. The smallest union of the copies of the unit cell needed for one unfolding of a generating chain, i.e. for completion of one of its images, $I_0$, is the minimal unfolding and it is denoted by $\operatorname{mu}(I_0)$.
\end{definition}

\begin{remark}\label{infbase}
    Notice that even if $I$ is an infinite free chain, then an image of $I$ is still a finite length arc in the periodic system (since an image cannot have multiple copies of any generating arc). For closed or open free chains, an image does not depend on the choice of basepoint in the generating chain. However, in the case of infinite free chains, an image does depend on the choice of basepoint (a similar ambiguity is discussed in \cite{Fukuda2022, Fukuda2023, Morton2009}). Similarly, the minimal unfolding of an image of a closed or open free chain is independent of a choice of basepoint for the system, but in the case of infinite free chains it does depend on it.
\end{remark}

\begin{figure}[ht!]
    \centering
    \includegraphics[scale=0.03]{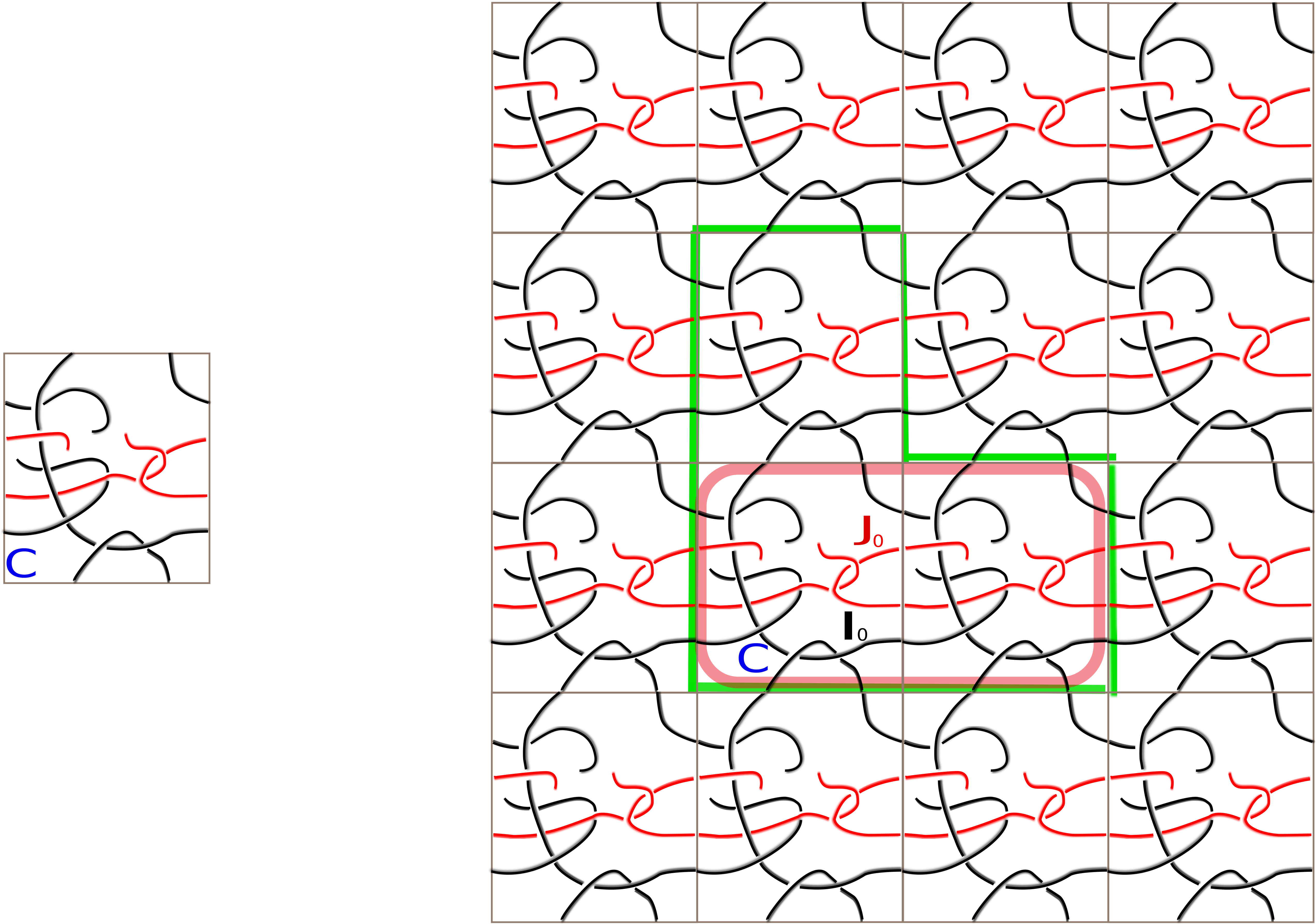}
    \caption{2D cartoon representation of a 3D system employing 2 PBC. Left: The base cell, $C$, which generates a 2 PBC system. Right: The free chain, $I$, (resp. $J$ ) is the set of  black (resp. red) chains in the periodic system. $I_0$ (resp. $J_0$), is the parent image of the free chain, $I$ (resp. $J$) and its minimal unfolding, $\operatorname{mu}(I_0)$, (resp. $\operatorname{mu}(J_0)$) is highlighted in green (resp. red)}
    \label{PBCterms}
\end{figure} 
\noindent See Figure \ref{PBCterms} for an illustrative example of two free chains in a system employing 2 PBC. 

\subsection{The Cell Jones polynomial}\label{Jones_cell}

A generating cell consists of a collection of arcs that can be seen as a collection of curves in 3-space whose collective entanglement can be measured using the method introduced in \cite{Barkataki2022}.
Given a PBC system and its generating cell, the Cell Jones polynomial is defined as follows:
\begin{definition}(Cell Jones polynomial)
Let $l_C$ denote the collection of generating arcs in a generating cell, $C$, of a periodic system. The Cell Jones polynomial, $V_C$, is defined as the Jones polynomial of $l_C$, namely,
$\displaystyle \mathsf{V}_C=\mathsf{V}(l_C)$.
\label{proper_CJP}
\end{definition}

The Cell Jones polynomial captures the topological/geometrical complexity of the collections of arcs lying inside a unit cell of the PBC system and has the following properties:
 
\begin{enumerate}
    \item When the images of the free chains do not touch the faces of the cell, the Cell Jones polynomial is equal to the Jones polynomial of the closed/open link inside a cell.
    \item It is preserved under any periodic translation of the unit cell with respect to the frame of reference.
    \item Notice that, $\mathsf{V}_C$ treats generating arcs that belong to the same generating chain as different components and these can be open curves in 3-space. Thus, $\mathsf{V}_C$ is a polynomial of real coefficients and not a topological invariant, even for generating chains that give rise to closed chains in the periodic system. It is a continuous function of the curve coordinates,  under deformations that do not change the number of components that intersect the cell.
\end{enumerate}

\subsection{The Periodic Jones polynomial}\label{Jones_periodic}

If we only consider the entanglement with respect to a base cell, as described in the previous section, we may be unable to detect important topological information that can be seen only by accounting the entanglement present in the periodic system. In particular, the link corresponding to a single cell may fail to capture the total topological complexity of any image of any free chain and the total topological entanglement imposed on an image of any of the free chains by the rest. In this section, we provide a means to capture the global topological complexity of a PBC system via a finite collection of curves, whose Jones polynomial satisfies the following requirements :
\begin{enumerate}
    \item It is a finite polynomial in one variable.
    \item When the images of the free chains do not touch the faces of the cell, it is equal to the Jones polynomial of the link inside a cell.
    \item It captures the topological complexity of any image of any free chain.
    \item It captures the total topological complexity imposed on an image of each free chain in the system by all the other chains.
    
\end{enumerate}
The Periodic Jones polynomial is defined by using as reference a larger cell, called the minimal collective unfolding, which is the minimal, convex collection of cells needed to unfold an entire image of each component, namely:

\begin{definition}(Minimal collective unfolding) Let $C$ be the base cell of a PBC system in 3-space, with $n$ generating chains. Let $I_0^{(i)}$ denote the parent image of the $i^{th}$ chain, where $i \in \{ 1, 2, \cdots n \}$ and let $\operatorname{mu}(I_0^{(i)})$ denote its minimal unfolding of size, $x_{\operatorname{mu}\left(I_0^{(i)}\right)} \times y_{\operatorname{mu}\left(I_0^{(i)}\right)} \times z_{\operatorname{mu}\left(I_0^{(i)}\right)}$. The minimal collective unfolding, $\mathcal{MU}_C$, is defined as the parallelepiped with size $x_{\mathcal{MU}_C}\times y_{\mathcal{MU}_C}\times z_{\mathcal{MU}_C}$, where $\displaystyle x_{\mathcal{MU_C}}=\operatorname{max}  \{ x_{\operatorname{mu}(I_0^{(i)})} \}_{i=1}^n \quad , \quad y_{\mathcal{MU}_C}=max\{ y_{\operatorname{mu}(I_0^{(i)})} \}_{i=1}^n \quad , \quad z_{\mathcal{MU}_C}=max\{ z_{\operatorname{mu}(I_0^{(i)})} \}_{i=1}^n.$ 
\label{MCU}
\end{definition}

\begin{remark}
 Given a periodic system and a generating cell, $C$,  $\mathcal{MU}_C$ is unique and it is independent of its location in the periodic system. It is a finite sheeted covering of the identification space of the generating cell. 
 \end{remark} 
\begin{proposition}
The minimal collective unfolding, $\mathcal{MU}_C$, is the smallest convex set formed by a union of cells of the periodic system such that it contains at least one image of each free chain. 
\label{mu_smallest}
\end{proposition}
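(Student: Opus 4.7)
The plan is to verify first that $\mathcal{MU}_C$ itself is a convex union of cells containing at least one image of each free chain, and then to show that any other such set must dominate $\mathcal{MU}_C$ in every coordinate direction, so cannot be strictly smaller.

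First I would check the three properties of $\mathcal{MU}_C$. Convexity is immediate from Definition \ref{MCU}, since $\mathcal{MU}_C$ is declared to be a parallelepiped. The fact that it is a union of base cells follows because each of the lengths $x_{\operatorname{mu}(I_0^{(i)})},\,y_{\operatorname{mu}(I_0^{(i)})},\,z_{\operatorname{mu}(I_0^{(i)})}$ is by construction a positive integer multiple of the corresponding cell dimension $x,y,z$, so their coordinate-wise maxima $x_{\mathcal{MU}_C},y_{\mathcal{MU}_C},z_{\mathcal{MU}_C}$ are themselves integer multiples of $x,y,z$. To see that $\mathcal{MU}_C$ contains an image of every free chain $I^{(i)}$, I would argue by the translational invariance of the PBC system: since $\mathcal{MU}_C$ dominates $\operatorname{mu}(I_0^{(i)})$ in every direction, one can translate $I_0^{(i)}$ by a suitable lattice vector of the periodic system so that its minimal unfolding lies inside $\mathcal{MU}_C$; this translate is then an image of $I^{(i)}$ contained in $\mathcal{MU}_C$.

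For minimality, let $S$ be any convex subset of the periodic system that is a union of cells and that contains at least one image $I_{k_i}^{(i)}$ of each free chain. Because $S$ is a convex union of axis-aligned cells of the periodic lattice, it is forced to be a rectangular parallelepiped aligned with that lattice: if $S$ contained two cells whose convex hull crossed through a cell not in $S$, that intermediate cell would also have to be included by convexity, and iterating this observation shows that $S$ equals the product of its three coordinate projections, each of which is an integer multiple of the corresponding cell dimension. Now, for each $i$, the convex set $S$ contains the image $I_{k_i}^{(i)}$, whose axis-aligned bounding box has dimensions at least $x_{\operatorname{mu}(I_0^{(i)})}\times y_{\operatorname{mu}(I_0^{(i)})}\times z_{\operatorname{mu}(I_0^{(i)})}$ by the very definition of the minimal unfolding. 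Hence the projections of $S$ on the three coordinate axes have lengths at least $x_{\operatorname{mu}(I_0^{(i)})},y_{\operatorname{mu}(I_0^{(i)})},z_{\operatorname{mu}(I_0^{(i)})}$ respectively, and taking the maximum over $i$ yields $x_S\geq x_{\mathcal{MU}_C}$, $y_S\geq y_{\mathcal{MU}_C}$, $z_S\geq z_{\mathcal{MU}_C}$, which is the minimality claim (and in particular $S$ contains a translate of $\mathcal{MU}_C$).

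The main obstacle I anticipate is the combinatorial lemma asserting that a convex union of axis-aligned cells in the periodic lattice is itself a rectangular parallelepiped. The picture is intuitively clear, but a clean proof calls for a short induction on the diameter of the occupied cell-pattern, with care taken for cells that meet only along a face or an edge and for degenerate configurations where $S$ is one cell thick in some direction. A secondary subtlety is to confirm that the dimensions attached to $\operatorname{mu}(I_0^{(i)})$ in Definition \ref{MCU} really equal the coordinate extents of the image $I_0^{(i)}$, so that no image of chain $i$ can be packed into a strictly smaller axis-aligned box; this should follow directly from the minimality built into the definition of the minimal unfolding, but it is the place where any hidden ambiguity (for instance the basepoint dependence flagged in Remark \ref{infbase}) would surface.
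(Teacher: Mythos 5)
Your proposal is correct and follows essentially the same route as the paper, whose entire proof is the one-line observation that shrinking $\mathcal{MU}_C$ by one cell in the direction where some $\operatorname{mu}(I_0^{(i)})$ attains the maximum leaves no room for an image of that chain. Your version is considerably more complete: you also verify that $\mathcal{MU}_C$ does contain an image of each free chain (via lattice translation) and you correctly identify and can discharge the implicit lemma that a convex union of axis-aligned cells is a rectangular parallelepiped (its boundary lies in axis-aligned grid planes, so all facets are axis-aligned), neither of which the paper addresses.
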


\begin{proof}
Suppose $x \times y \times z$ are the dimensions of the unit cell $C$ and that $n_1 x \times n_2 y \times n_3 z$ are the dimensions of $\mathcal{MU}_C$. Suppose, without loss of generality, that $n_1=x_{\operatorname{mu}(I_0^{(i)})}$ for some $i$ generating chain $i$. Then any other convex set of size $(n_1-1) x \times n_2 y \times n_3 z$ cannot contain an image of the free chain $I$. 
\end{proof}

\begin{definition}(Minimal periodic link) Let $C$ be the base cell of a PBC system in 3-space, generated by $n$ free chains. The minimal periodic link, $\mathcal{L}_C$, is defined to be the link consisting of all the images of the $n$ free chains which intersect or are contained in the minimal collective unfolding, $\mathcal{MU}_C$.
\label{FER}
\end{definition}

Figure \ref{mu1} illustrates an example of the minimal collective unfolding, $\mathcal{MU}_C$, and the corresponding minimal periodic link, $\mathcal{L}_C$, for the 2 PBC system in Figure \ref{PBCterms}. Given the coordinates of the parent image for each generating chain in the system, the minimal collective unfolding and the minimal periodic link can be constructed via Algorithm \ref{AlgoH}.   
\begin{algorithm}[ht!]
\SetAlgoLined
Set the values $x_{\mathcal{MU}_C}=0$, $y_{\mathcal{MU}_C}=0$, $z_{\mathcal{MU}_C}=0$\\
\For{$i \in \{1, 2, \cdots, n \}$}{Find $\operatorname{mu}(I^{(i)})$ (see Algorithm 1 in \cite{Panagiotou2015})\\

Find the length, breadth and height of $\operatorname{mu}(I^{(i)})$ and set,\\
$x=x_{\operatorname{mu}(I^{(i)})}, \quad y=y_{\operatorname{mu}(I^{(i)})}, \quad z=z_{\operatorname{mu}(I^{(i)})}$.\\ 

Reset values as,\\
$x_{\mathcal{MU}_C}=\operatorname{max}(x_{\mathcal{MU}_C},x),\quad y_{\mathcal{MU}_C}=\operatorname{max}(y_{\mathcal{MU}_C},y),\quad z_{\mathcal{MU}_C}=max(z_{\mathcal{MU}_C},z).$
}
Construct $\mathcal{MU}_C$ with dimensions $x_{\mathcal{MU}_C}\times y_{\mathcal{MU}_C}\times z_{\mathcal{MU}_C}$\\
Assign a list $\mathcal{L}_C =[ \quad ]$\\
\For{$i \in \{1, 2, \cdots, n \}$}{Append $\mathcal{L}_C$ by all images of the $i^{th}$ chain that intersect $\mathcal{MU}_C$}
\Return{$\mathcal{L}_C$}
\caption{Construction of $\mathcal{L}_C$ (minimal periodic link)}
\label{AlgoH}
\end{algorithm}
\begin{figure}[ht!]
    \centering
     \includegraphics[scale=0.03]{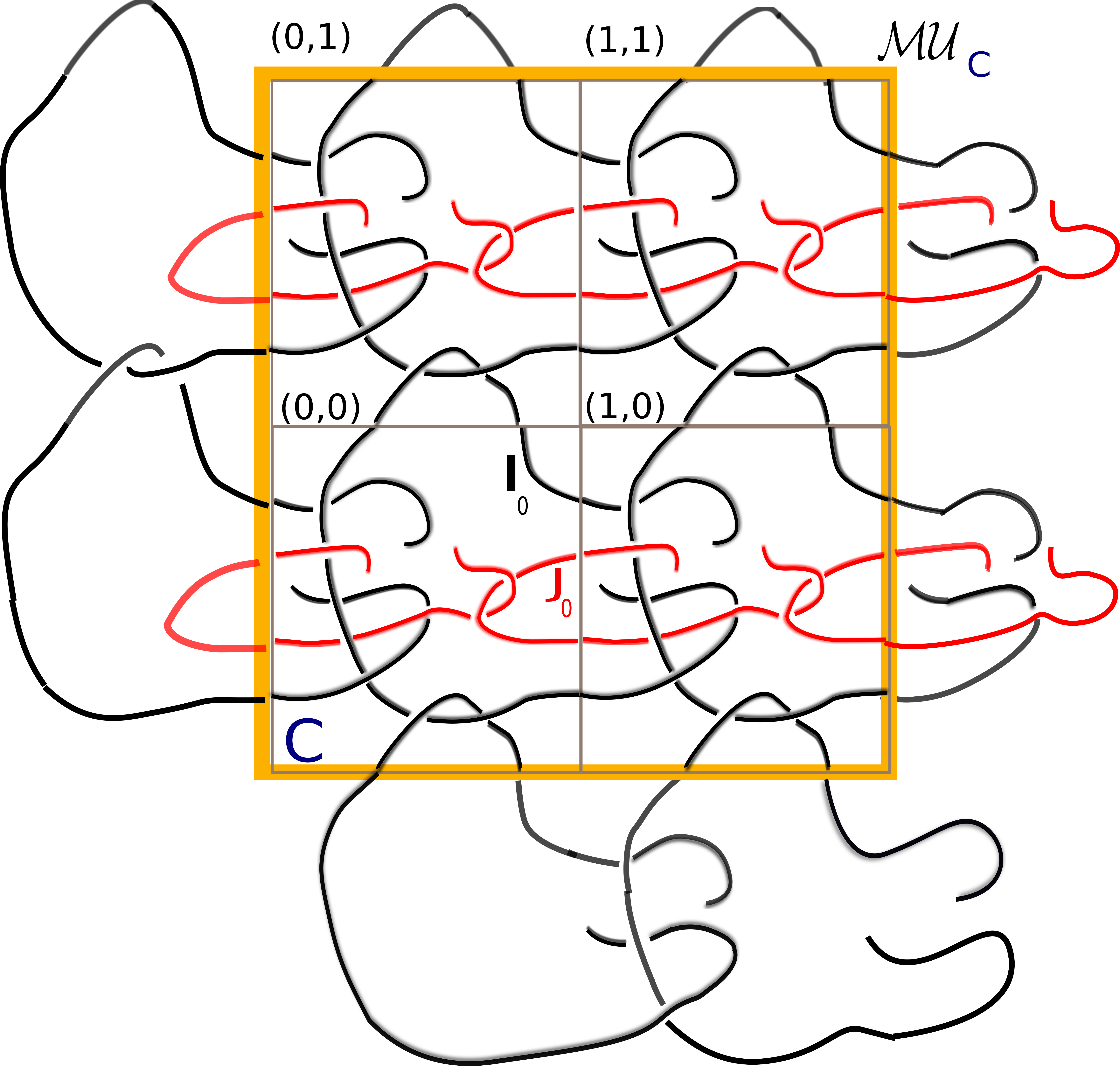}
    \caption{Let $(0,0)$ denote the base cell, $C$, of the 2 PBC system in Figure(\ref{PBCterms}). $\operatorname{mu}(I_0)$ comprises of the cells labelled by $(0,1), (0,0)$ and $(1,0)$ and $\operatorname{mu}(J_0)$ comprises of the cells labelled by $(0,0)$ and $(1,0)$. The region bounded by the cells $(0,1), (0,0), (1,1)$ and $(1,0)$ forms the minimal collective unfolding, $\mathcal{MU}_C$ of the PBC system. The link comprising of $I_0, J_0$ and all other images of the chains $I$ and $J$ that intersect $\mathcal{MU}_C$ is the minimal periodic link, $\mathcal{H}_0$.}
    \label{mu1}
\end{figure}

\noindent The minimal periodic link , $\mathcal{L}_C$, in a PBC system has the following properties :
\begin{enumerate}
    \item It contains at least one image of each free chain.
    \item It contains the images of all free chains  that may impose topological constraints to an image of any free chain.
    \item It is independent of the position of the minimal collective unfolding in the periodic system with respect to the frame of reference.
\end{enumerate}  

\begin{corollary}
    Given a unit cell, $C$, of a periodic system, the minimal periodic link, $\mathcal{L}_C$, is the minimal link (with respect to the number of components) that satisfies the above properties.
\end{corollary}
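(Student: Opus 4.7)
The plan is to show that any link $\mathcal{L}'$ consisting of images of the free chains and satisfying properties 1--3 must have at least as many components as $\mathcal{L}_C$. First, I would invoke property 1: $\mathcal{L}'$ must contain at least one image of each of the $n$ generating chains, giving $|\mathcal{L}'| \geq n$. By the position-independence guaranteed by property 3, we may assume (after a periodic translation of the frame) that these $n$ distinguished images are precisely the parent images $I_0^{(1)}, \dots, I_0^{(n)}$, which by definition of the minimal collective unfolding are contained in $\mathcal{MU}_C$.

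Next, I would account for the extra components of $\mathcal{L}_C$ beyond these parent images, which by Definition~\ref{FER} are the images $J_k$ (distinct from the chosen parent images) that intersect $\mathcal{MU}_C$. For each such $J_k$, the goal is to conclude, via property 2, that $J_k$ must also appear in $\mathcal{L}'$. Since $J_k \cap \mathcal{MU}_C \neq \emptyset$ and $\mathcal{MU}_C$ is a convex region containing all parent images, $J_k$ shares space with the region housing the parent images. This spatial proximity is enough to exhibit a configuration in which $J_k$ is nontrivially linked with some $I_0^{(i)}$ (e.g., by an allowable perturbation of $J_k$ within $\mathcal{MU}_C$), so $J_k$ may impose topological constraints on a parent image, forcing its inclusion.

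To close the count, I would argue that images disjoint from $\mathcal{MU}_C$ are genuinely dispensable. Convexity of $\mathcal{MU}_C$, which follows from Proposition~\ref{mu_smallest}, lets one separate any image lying entirely outside $\mathcal{MU}_C$ from every parent image by a plane, precluding nontrivial linking and hence any topological constraint. So such images are not demanded by property 2. Combining these observations gives $|\mathcal{L}'| \geq |\mathcal{L}_C|$, which together with the fact that $\mathcal{L}_C$ itself satisfies properties 1--3 (as already listed) proves minimality.

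The main obstacle will be formalizing the slogan ``may impose topological constraints'' so that it cleanly selects the images intersecting $\mathcal{MU}_C$ and rejects those disjoint from it. The intuition is that geometric adjacency to the parent images (i.e., contact with $\mathcal{MU}_C$) is exactly the condition under which linking becomes possible, while geometric separation precludes it; making this precise will require combining a convex separation argument for the ``outside'' direction with a perturbation/realizability argument for the ``inside'' direction.
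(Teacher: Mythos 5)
Your overall strategy---show that any link satisfying properties 1--3 must already contain every component of $\mathcal{L}_C$, then conclude by minimality---matches the paper's. But the mechanism you use to decide which images are forced and which are dispensable has two genuine problems. First, the ``outside'' direction: you claim that convexity of $\mathcal{MU}_C$ lets you separate any image disjoint from $\mathcal{MU}_C$ from every parent image by a plane. That is false as stated: an image lying entirely outside a convex region need not itself be convex, and a curve that partially wraps around $\mathcal{MU}_C$ cannot be separated from it by a plane. (The correct weaker statement is that a parent image contained in the convex, hence contractible, region $\mathcal{MU}_C$ is null-homotopic in the complement of any chain disjoint from $\mathcal{MU}_C$; but even that does not dispose of all ``topological constraints'' in the sense intended---the paper's own Seifert-surface caveat shows that a chain can interact topologically with $I_0$ without entering the region in which $I_0$ unfolds.) Also, convexity of $\mathcal{MU}_C$ does not ``follow from Proposition \ref{mu_smallest}''; it is built into Definition \ref{MCU} ($\mathcal{MU}_C$ is a parallelepiped).

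Second, and more importantly, your ``inside'' direction rests on a perturbation argument: an image $J_k$ intersecting $\mathcal{MU}_C$ ``may impose constraints'' because an allowable perturbation of $J_k$ could link it with a parent image. If perturbations are admitted as the meaning of ``may,'' the dichotomy collapses: images outside $\mathcal{MU}_C$ could equally be perturbed into linking, and nothing selects $\mathcal{L}_C$. The paper's argument avoids this by routing through the \emph{per-chain} minimal unfoldings: for each free chain $I$, every chain intersecting $\operatorname{mu}(I_0)$ may constrain $I_0$, and (because constraints can also be mediated by surfaces spanning $I_0$) one must take all chains intersecting a convex union of cells containing \emph{all} the $\operatorname{mu}(I_0^{(i)})$; Proposition \ref{mu_smallest} then identifies $\mathcal{MU}_C$ as the smallest such union, which is exactly what pins down $\mathcal{L}_C$ as the minimal admissible link. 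Your proposal never invokes Proposition \ref{mu_smallest} in this role, which is the one place it is actually needed; without it the count $|\mathcal{L}'| \geq |\mathcal{L}_C|$ is not established.
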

\begin{proof}
      Let $\mathcal{L}$ be an arbitrary link extracted from the infinite system which satisfies the above properties.  Let $I$ be a free chain in the periodic system. Then, by properties i and ii,  $\mathcal{L}$ contains an image, say $I_0$, of $I$, such that all the chains in the periodic system that impose topological constraints to $I_0$ are also contained in  $\mathcal{L}$. Therefore, all the chains that intersect $\operatorname{mu}(I_0)$ belong to $\mathcal{L}$ for any free chain $I$. Moreover, to ensure that all the chains that interact topologically with $I_0$ are accounted for, the chains that intersect a convex set that contains $\operatorname{mu}(I_0)$ must be in $\mathcal{L}$ (Consider for example a closed chain $I_0$ whose Seifert surface intersects $\operatorname{mu}(I_0)$).    Therefore,  $\mathcal{L}$ must be the collection of chains that intersect a convex union of cells that contains all minimal unfoldings. By Proposition \ref{mu_smallest}, $\mathcal{MU}_C$ is the smallest such cell. This implies $\mathcal{L}=\mathcal{L}_C$.
\end{proof}

The definition of the minimal periodic link enables us to assign a Jones polynomial to a periodic system, which satisfies the main requirements mentioned in the introduction of this section.

\begin{definition}(Periodic Jones polynomial) Consider a periodic system generated by a cell $C$. The Periodic Jones polynomial, $\mathsf{V}_P(C)$, is defined as the Jones polynomial of the minimal periodic link, $\mathcal{L}_C$, namely, $\displaystyle  \mathsf{V}_P(C) := \mathsf{V}(\mathcal{L}_C).$
\label{PJPinf}
\end{definition}
\noindent The Periodic Jones polynomial, defined as above, is a (finite) one variable polynomial and has the following properties:
\begin{enumerate}
\item It captures the topological complexity of any image of any free chain.
    \item It captures the total topological complexity imposed on any image of a free chain. 
    \item It is independent of the position of the minimal collective unfolding with respect to the frame of reference.
    \item It is a polynomial with integer coefficients, for systems of closed curves, and it is a topological invariant  almost everywhere (see Remark \ref{ae}).
    \item It is a polynomial with real coefficients, for systems of open curves, and is a continuous function of the curve coordinates almost everywhere (see Remark \ref{ae}).
    \item When the images of the free chains do not touch the boundary of the generating cell, the Periodic Jones polynomial equals the Jones polynomial of the link of curves contained completely within the cell. In this case, the Cell Jones polynomial and the Periodic Jones polynomial coincide.
\end{enumerate}

\begin{remark}
\label{inf_chains}
As mentioned in Remark \ref{infbase}, in the case of systems of infinite free chains, the image of a free chain (and its minimal unfolding) depend on the choice of base point. Thus, the Periodic Jones polynomial of a periodic system of infinite free chains becomes well defined only when a base point for each generating chain in the system is specified. For a periodic system of infinite free chains with a choice of base point for each parent image, the Periodic Jones polynomial has the same properties as that of finite open free chains in PBC. One may choose the base points to maximize the number of components in the minimal periodic link for the Periodic Jones polynomial of a system.
\end{remark}

\begin{remark}\label{ae}
The minimal periodic link and the Periodic Jones polynomial of a PBC system are uniquely determined by its base cell, $C$ (and a choice of base points in the case of infinite chains). Under deformation of a given system, the relative positions of the chains to the cell, or the cell itself, may change and this could change the minimal periodic link (which is defined by the minimal unfoldings of chains relative to a cell). That may result in a change of the number of components in the minimal periodic link, which would lead to a discontinuity in the Periodic Jones polynomial. However, such discontinuities occur for a set of measure zero in the space of configurations (corresponding to configurations when a vertex of a polygonal chain intersects one of the faces of a cell to enter a new cell, changing its minimal unfolding). A normalization of the polynomial on the number of components can be used in such situations (see Section \ref{modulo}) to partially account for the change of the number of components in the unit cell. Another approach to address deformations in periodic systems in time without discontinuities, is to keep the components of the minimal periodic link fixed to that of time $t=0$ and study its evolution and the evolution of its Periodic Jones polynomial in time, without re-evaluating the minimal periodic link relative to the cell in time. Then, the Periodic Jones polynomial is an invariant for closed chains and a continuous function of the chain coordinates for open and infinite chains without any discontinuities or changes of number of components.
\end{remark}

\begin{corollary} The Periodic Jones Polynomial of a PBC system defined by a cell $C$, can be expressed as,
$\quad \displaystyle \mathsf{V}_P(C) :=  \int_{\vec{\xi}\in S^2} \mathsf{V}_P(C_{\vec{\xi}})dS =  \int_{\vec{\xi}\in S^2} \mathsf{V}\left((\mathcal{L}_C)_{\vec{\xi}}\right)dS$, \quad
where $\displaystyle \mathsf{V}_{P}(C_{\vec{\xi}}):=\mathsf{V}\left((\mathcal{L}_C)_{\vec{\xi}}\right)$ is the diagrammatic Periodic Jones polynomial with respect to $\vec{\xi}\in S^2$.
\end{corollary}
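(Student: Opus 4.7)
The plan is to unwind the two relevant definitions in succession. First, by Definition \ref{PJPinf}, the Periodic Jones polynomial is literally the Jones polynomial of the minimal periodic link, namely $\mathsf{V}_P(C)=\mathsf{V}(\mathcal{L}_C)$. By Definition \ref{FER}, $\mathcal{L}_C$ is a finite collection of open or closed oriented curves in 3-space (it consists of the finitely many images of the $n$ free chains that meet $\mathcal{MU}_C$, and each such image is a finite arc by Remark \ref{infbase}). Therefore Definition \ref{jones-open-def} applies with $l=\mathcal{L}_C$.

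Next I would substitute $l=\mathcal{L}_C$ into Equation \eqref{Jones_OC} to obtain
\[
\mathsf{V}_P(C)=\mathsf{V}(\mathcal{L}_C)=\frac{1}{4\pi}\int_{\vec{\xi}\in S^2}\left(-A^3\right)^{-\mathsf{Wr}\left((\mathcal{L}_C)_{\vec{\xi}}\right)}\left\langle (\mathcal{L}_C)_{\vec{\xi}}\right\rangle dS.
\]
For each generic $\vec{\xi}\in S^2$, the integrand is exactly the Kauffman-normalized bracket polynomial of the diagram $(\mathcal{L}_C)_{\vec{\xi}}$, which is the classical diagrammatic Jones polynomial $\mathsf{V}((\mathcal{L}_C)_{\vec{\xi}})$ of that knot/link or knotoid/linkoid diagram (see property 3 of the Jones polynomial listed after Definition \ref{jones-open-def}: for a fixed diagram of closed curves the integral collapses to the single-diagram formula, and the same integrand is used in the open/linkoid case as defined in \cite{Barkataki2022}). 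Adopting the notation introduced in the corollary, $\mathsf{V}_P(C_{\vec{\xi}}):=\mathsf{V}\left((\mathcal{L}_C)_{\vec{\xi}}\right)$, the displayed equation becomes the claimed identity
\[
\mathsf{V}_P(C)=\int_{\vec{\xi}\in S^2}\mathsf{V}_P(C_{\vec{\xi}})\,dS=\int_{\vec{\xi}\in S^2}\mathsf{V}\left((\mathcal{L}_C)_{\vec{\xi}}\right)dS,
\]
up to the constant $\frac{1}{4\pi}$ which I would either absorb into the normalization of $\mathsf{V}_P(C_{\vec{\xi}})$ or carry explicitly, matching the convention of the corollary.

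There is essentially no mathematical obstacle: the corollary is a direct restatement of the integral formula once one recognises that $\mathcal{L}_C$ is a finite collection of curves eligible for Definition \ref{jones-open-def}. The only care needed is bookkeeping. I would make a brief remark confirming that (a) for systems of infinite generating chains the images used to build $\mathcal{L}_C$ are finite arcs (Remark \ref{infbase}), so $\mathcal{L}_C$ really is a finite link/linkoid in $\mathbb{R}^3$; (b) the set of non-generic projections that must be excluded from the integrand is of measure zero, as in Definition \ref{jones-open-def}; and (c) the $\frac{1}{4\pi}$ normalization convention is consistent between the two sides. With those remarks in place, the proof is one line of substitution.
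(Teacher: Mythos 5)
Your proof is correct and takes essentially the same route as the paper, whose entire argument is the one-line observation that the identity follows from Definitions \ref{jones-open-def} and \ref{PJPinf}; your version simply spells out the substitution $l=\mathcal{L}_C$ and the identification of the integrand with the diagrammatic Jones polynomial. Your side remark about the $\tfrac{1}{4\pi}$ normalization is a fair catch of a notational mismatch between Equation \eqref{Jones_OC} and the corollary's display, but it does not affect the substance of the argument.
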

\begin{proof}
    It follows from  Definitions \ref{jones-open-def} and \ref{PJPinf}.
\end{proof}

\subsection{Normalization of the Jones polynomial}
\label{modulo}
In this section we discuss a normalization of the Jones polynomial as a means to
compare, in terms of entanglement complexity, systems with different number of components.

\begin{definition}(Normalized Jones polynomial)
Let  $\mathcal{L}_n$ be a link (linkoid) with $n$ components. The Normalized Jones polynomial of $\mathcal{L}_n$ is defined as :
\begin{equation}
    \mathsf{NV}(\mathcal{L}_n) := \mathsf{V}(\mathcal{L}_n) d^{-(n-1)}
    \label{norm_JP}
\end{equation}
where the numerator, $\mathsf{V}(\mathcal{L}_n)$, is the Jones polynomial of $\mathcal{L}_n$ and the denominator, $d^{n-1}$, is the Jones polynomial of the trivial link with $n$ components.
\label{normalized_JP}
\end{definition}

In Equation \ref{norm_JP}, the fraction $\mathsf{NV}(\mathcal{L}_n) \in \mathbb{R}(A^2)$ is not necessarily a polynomial. By implementing Euclidean division of polynomials, it is possible to express $\mathsf{V}(\mathcal{L}_n)$ as $\mathsf{V}(\mathcal{L}_n) = d^{n-1} q(A^2) + r(A^2)$,
where $\displaystyle q(A^2), r(A^2) \in \mathbb{R}[A^2]$ are the quotient and the remainder, respectively. Dividing both sides by $d^{n-1}$, we get $\mathsf{NV}(\mathcal{L}_n) =  q(A^2) + r(A^2)d^{-(n-1)}$

Thus, we infer the following :
\begin{enumerate}
   \item If $r(A^2) = 0$, then the link, $\mathcal{L}_n$, is a disjoint union of $n$ copies of a knot whose Jones polynomial is equal to $\displaystyle \sqrt[n]{q(A^2)}$. If $r(A^2) \neq 0$, then the $n$ components in the link are not all disjoint (see Example \ref{ex_qr_hopf}).
   \item If $\mathcal{L}_n$ is a collection of closed curves in 3-space, the Normalized Jones polynomial is a topological invariant. If $\mathcal{L}_n$ is a collection of open curves in 3-space, the Normalized Jones polynomial is a continuous function of the curve coordinates.
\end{enumerate}

Since the remainder of the normalized Jones (and Periodic Jones) polynomial is zero only for links with a Jones polynomial equal to that of the trivial link of any number of components, we will use it as a proxy of a measure of comparison between the complexity of the Jones polynomials of links with different number of components. This is demonstrated in the following example, where normalization is applied to the Jones polynomial of the Hopf link:

\begin{example}
The Hopf link consists of 2 components and depending on their relative orientations, the Jones polynomial is either $-A^2-A^{10}$ or $-A^{-2}-A^{-10}$. Whereas, the Jones polynomial of the trivial link with 2 components is, $d=-A^{2}-A^{-2}$. We can rewrite the Jones polynomial of the Hopf link as follows: $-A^2-A^{10} = d\times (A^8-A^4+2) +(2A^{-2})$ and $-A^{-2}-A^{-10} = d\times 0 + (-A^{-2}-A^{-10})$.
The remainder terms for both the expressions of the Jones polynomial of the Hopf link are non-zero, while the remainder of the unlink is zero, implying that the 2 components of the Hopf link (with any orientation) are interlinked. 
\label{ex_qr_hopf}
\end{example}

The normalization procedure can be applied to the Periodic Jones polynomial (as well as to the Cell Jones polynomial) in order to compare among systems with different number of components in the corresponding minimal periodic links.

\begin{definition}(Normalized Periodic Jones polynomial) For a PBC system defined by a cell $C$, whose minimal periodic link is $\mathcal{L}_C$, the Normalized Periodic Jones polynomial, $\mathsf{NV}_P(C)$ is defined as:
\begin{equation}
    \mathsf{NV}_P(C) := \mathsf{V}_P(C) d^{-(|\mathcal{L}_C|-1)}
    \label{NPJP}
\end{equation}
where, $\mathsf{V}_P(C)=\mathsf{V}(\mathcal{L}_C)$ and  $|\mathcal{L}_C|$ is the number of components in $\mathcal{L}_C$.
\label{NPJPinf}
\end{definition}

\section{Application of the Periodic Jones polynomial and the Cell Jones polynomial to systems employing PBC}\label{sec_app}

In this section we show how the Periodic Jones polynomial and the Cell Jones polynomial can be applied in practice to measure multi-chain entanglement of open and closed chains in systems employing PBC. In Section \ref{ex_inf_doubly}, we apply the Cell Jones polynomial and the Periodic Jones polynomial to examples of 3D realizations of textile patterns (doubly-periodic structures). In Section \ref{polymer_sys}, we apply the Periodic Jones polynomial to examples of polymer melt systems of linear chains of varying molecular weights obtained through a molecular dynamics simulation. Throughout this section, we present the Periodic Jones polynomial and the Cell Jones polynomial in terms of the variable $A$ without substituting $A=t^{-1/4}$. Different polynomials, either in the coefficients, the powers of $A$, or both, indicate different configurations. The span of a polynomial is the difference between the maximum exponent of $A$ and the minimum exponent of $A$. We will use the span as a measure of topological complexity of a system, assuming that higher span is indicative of higher topological complexity \cite{knot-book}. 

\subsection{Doubly-periodic structures}
\label{ex_inf_doubly}
Doubly-periodic structures are complex 3 dimensional entangled networks made of curves, which are infinite, intertwined threads \cite{Fukuda2022,Fukuda2023,Morton2009, Markande2020, Knittel2020, Wadekar2020, Wadekar2021}.  Let us consider two kinds of such structures (weaves), \textit{single jersey} and \textit{twill}, as shown  with cartoon representations in Figures \ref{tex1a} and \ref{tex1b}, respectively. We generate two specific 3D realizations of the systems, we denote their cells $\mathcal{C}_{jersey}$ and $\mathcal{C}_{twill}$, respectively, whose generating arcs are determined by the coordinates $l_{C_{jersey}}=\{(0,2,0.2),(1,2,0.2),(1,3,-0.2)\}$,
$\{(1,0,-0.2),(0.5,1,0),$\\ $(0.5,2.5,0), (1.5,2.8,0.1), (2.5,2.5,0), (2.5,0.5,0),(2,0,-0.2)\}$, 
$\{(2,3,-0.2),(2,2,0.2),(3,2,0.2)\}$ and\\
$l_{C_{twill}}=\{(3,4,-0.2),(2,3,0),(2,1,0),(4,1,0.2)\}$, $\{(0,1,0.2),(1,1,0.2),(1,2,0.2),(3,2.5,0.1),(3.1,1.8,0),$\\$(3,0,-0.2)\}$, 
respectively. We consider the base point of each generating chain to be the first point of its first arc in the list of coordinates. The Cell Jones polynomials of the two systems are:
\begin{equation}
    \begin{split}\displaystyle
     \mathsf{V}_{\mathcal{C}_{jersey}}&=0.02A^{-14}+0.08A^{-12}+0.02A^{-10}+0.08A^{-8}+0.26A^{-6}+0.34A^{-4}\\&+0.64A^{-2}+1.5+0.62A^{2}+0.26A^{4}+0.16A^{6}+0.02A^{8}.
    \end{split}
    \label{cell_1a}
\end{equation}
\begin{equation}
    \begin{split}\displaystyle
    \mathsf{V}_{\mathcal{C}_{twill}}&=-0.04A^{-2}+0.08-0.92A^{2}-0.1A^{4}-0.08A^{6}-0.08A^{8}-0.86A^{10}.\\
    \end{split}
    \label{cell_1b}
\end{equation}
Equations \ref{cell_1a} and \ref{cell_1b} show that the jersey and the twill systems are topologically distinct. For the given choice of base cell and parent image for each system, the results indicate that $span(\mathsf{V}_{\mathcal{C}_{jersey}})=22$ and $span(\mathsf{V}_{\mathcal{C}_{twill}})=12$. This indicates higher complexity in the jersey system compared to the twill. 
However, this may be reflecting the fact that the the cell of the jersey system consists of 3 arcs, while that of the twill consists of 2. In order to decrease the reflection of the number of components in the polynomials, we evaluate the Normalized Cell Jones polynomial of the jersey and twill systems, to better compare between the two systems (see Equations \ref{ncjp_j}, \ref{ncjp_t}).
\begin{equation}
\begin{split}\displaystyle
    \mathsf{NV}_{\mathcal{C}_{jersey}} &= 0.22+0.16A^{2}+0.02A^{4} \quad + \quad
    {d}^{-2} (0.02A^{-14}+0.08A^{-12}\\&+0.02A^{-10} +0.08A^{-8}+0.26A^{-6}+0.12A^{-4} +0.48A^{-2}+1.04+0.3A^{2}).
\end{split}    
\label{ncjp_j}
\end{equation}
\begin{equation}
\begin{split}\displaystyle
    \mathsf{NV}_{\mathcal{C}_{twill}} &= 1.7+0.02A^{2}-0.78A^{4}+0.08A^{6}+0.86A^{8} \quad + {d}^{-1} (1.66A^{-2}+0.1).
\end{split}        
\label{ncjp_t}
\end{equation}
In the above equations, the remainder polynomials are $r(\mathsf{NV}_{\mathcal{C}_{jersey}})=0.02A^{-14}+0.08A^{-12}+0.02A^{-10}+0.08A^{-8}+0.26A^{-6}+0.12A^{-4} +0.48A^{-2}+1.04+0.3A^{2}$ and $r(\mathsf{NV}_{\mathcal{C}_{twill}})=1.66A^{-2}+0.1$, and their spans are given as, $span(r(\mathsf{NV}_{\mathcal{C}_{jersey}}))=16$ and $span(r(\mathsf{NV}_{\mathcal{C}_{twill}}))=2$, respectively. Thus, the normalization for the number of components reinforces that the jersey is more complex than the twill.

To capture the global complexity of the systems, we examine the Periodic Jones polynomial. The minimal periodic links  of the two systems are shown in Figure \ref{wv_1_mpl}. The Periodic Jones polynomial for the jersey and the twill systems are given in Equations \ref{mpl_1a} and \ref{mpl_1b}, respectively.  Thus, for the given choice of base cell and base points for each system, $span(\mathsf{V}_P(\mathcal{C}_{jersey}))=86$ and $span(\mathsf{V}_P(\mathcal{C}_{twill}))=26$. This is indicative of higher global complexity in the jersey system as compared to the twill. 

\begin{figure}[ht!]
    \centering
     \raisebox{3 cm}{\includegraphics[height=2cm,width=2cm]{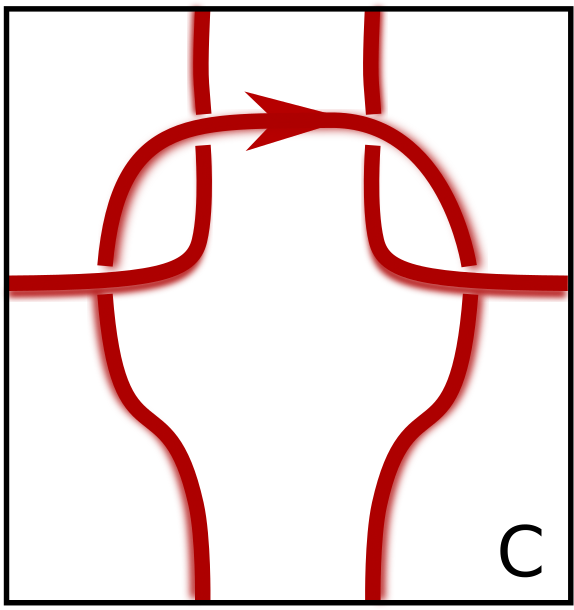}} \hspace{1cm}
    \includegraphics[height=8cm,width=9cm]{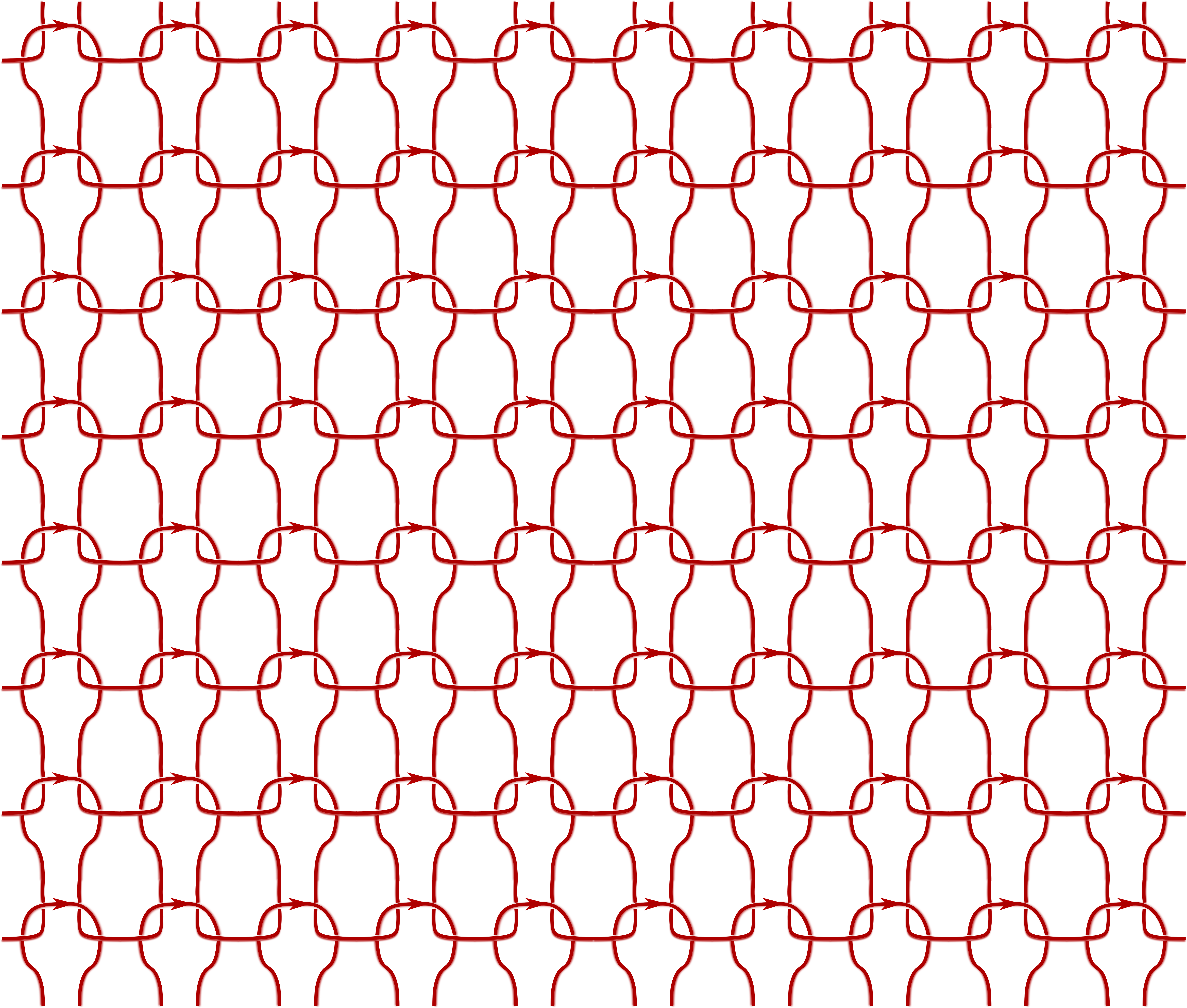}
    \caption{Left: A cartoon representative of the generating cell of a doubly-periodic structure (single jersey). Right: A cartoon representation of the doubly-periodic system extended in space.}
    \label{tex1a}
\end{figure}

\begin{equation}
    \begin{split}\displaystyle     \mathsf{V}_P(\mathcal{C}_{jersey})&=-0.029A^{-68}-0.029A^{-66}-0.118A^{-64}-0.029A^{-62}-0.088A^{-60}-0.059A^{-58}\\&-0.265A^{-56}-0.324A^{-54}+0.059A^{-52}+0.176A^{-50}+0.206A^{-48}-0.235A^{-46}\\&-0.118A^{-44}-1.971A^{-42}-1.176A^{-40}-2.324A^{-38}-5.265A^{-36}+4.529A^{-34}\\&+15.353A^{-32}-11.765A^{-30}-40.412A^{-28}-9.382A^{-26}+39.353A^{-24}+6.147A^{-22}\\&-55.853A^{-20}-39.265A^{-18}+22.412A^{-16}+23.265A^{-14}-23.176A^{-12}-30.706A^{-10}\\&-8.294A^{-8}+3.794A^{-6}-0.471A^{-4}-3.824A^{-2}-2.412-2.824A^{2}-2.353A^{4}\\&-1.853A^{6}-0.176A^{8}+0.853A^{10}+0.882A^{12}+0.029A^{14}-0.176A^{16}-0.088A^{18}.
    \end{split}
    \label{mpl_1a}
\end{equation}

\begin{figure}[ht!]
    \centering
    \raisebox{3 cm}{\includegraphics[height=2cm,width=2cm]{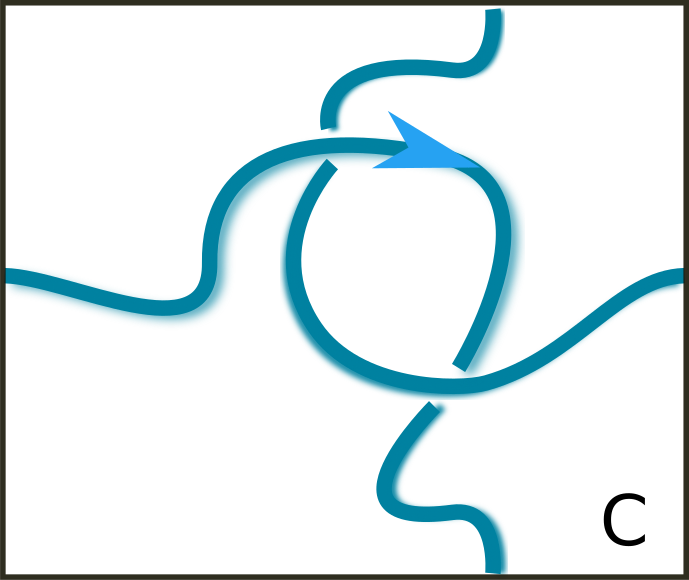}} \hspace{1cm}
    \includegraphics[height=9cm,width=9cm]{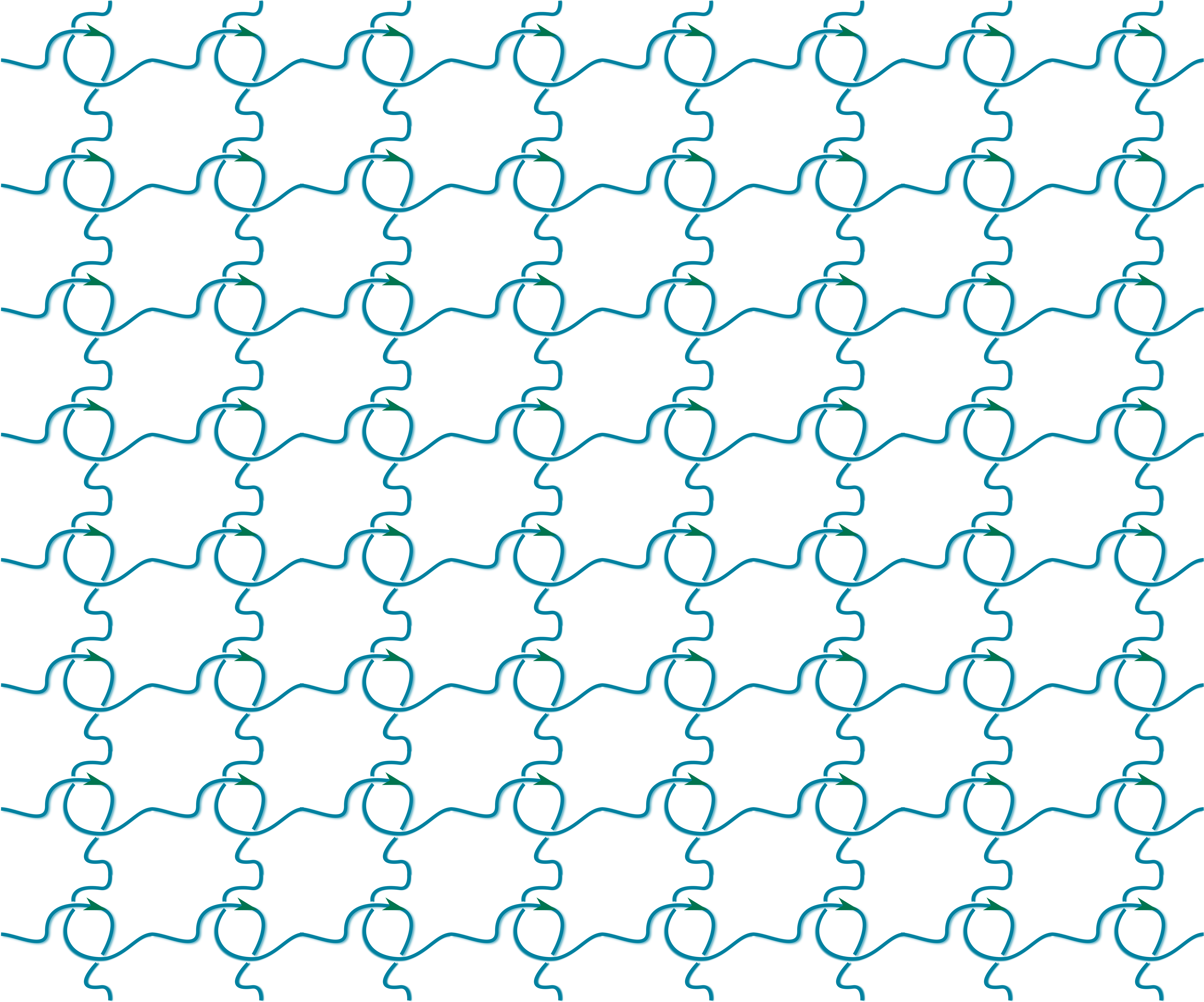}
       \caption{Left: A cartoon representative of the generating cell of a doubly-periodic structure of (twill). Right: A cartoon representation of the doubly-periodic system extended in space.}
    \label{tex1b}
\end{figure}
\begin{equation}
    \begin{split}\displaystyle    \mathsf{V}_P(\mathcal{C}_{twill})&=0.02A^{-6}+0.02A^{-4}+0.12A^{-2}+0.88+0.76A^{2}+0.88A^{4}\\&+0.96A^{6}+0.1A^{8}+0.14A^{12}+0.1A^{14}+0.02A^{20}.
    \end{split}
    \label{mpl_1b}
\end{equation}
\begin{figure}[ht!]
    \centering
    \includegraphics[scale=0.07]{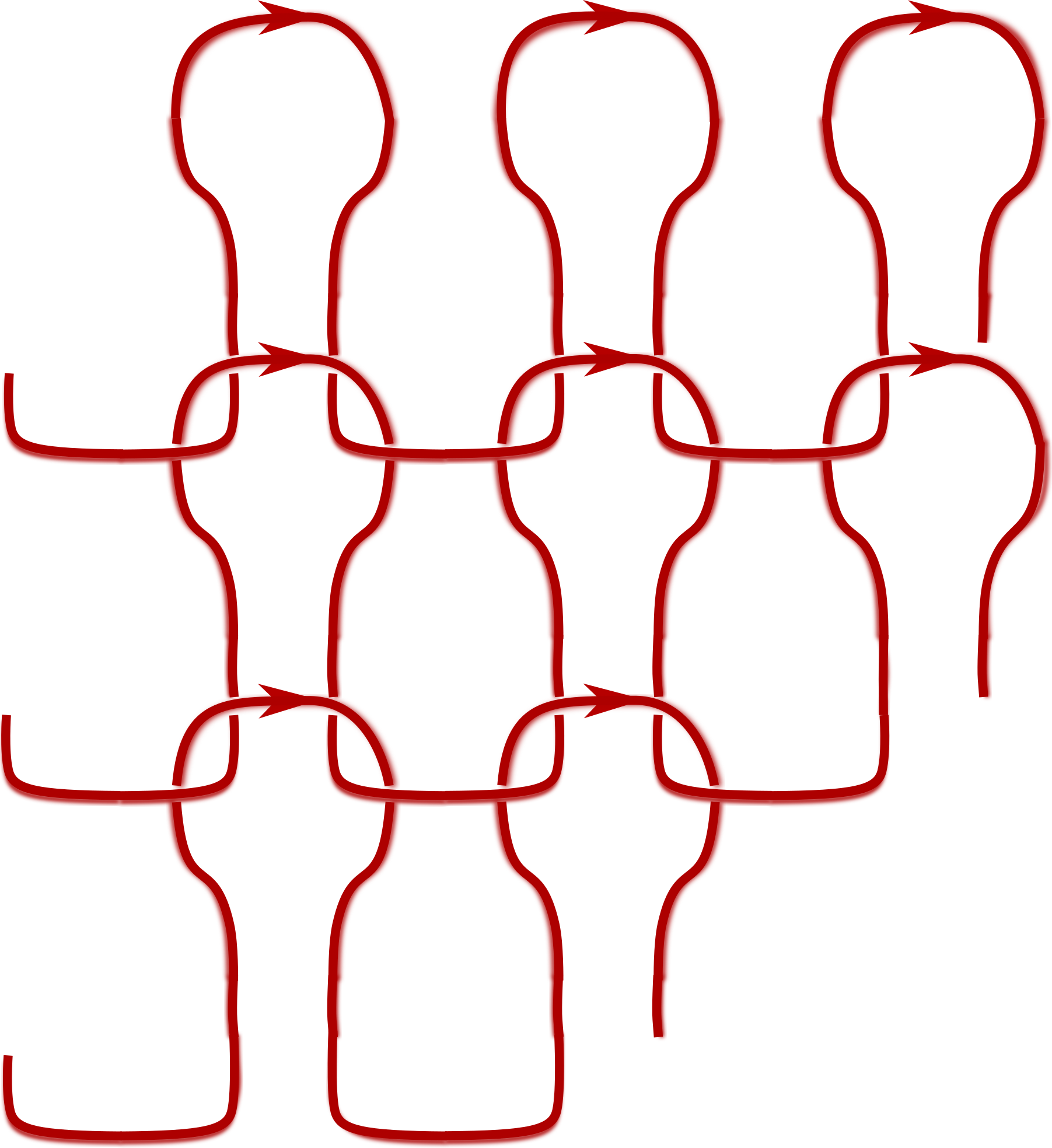} \quad \quad
    \includegraphics[scale=0.07]{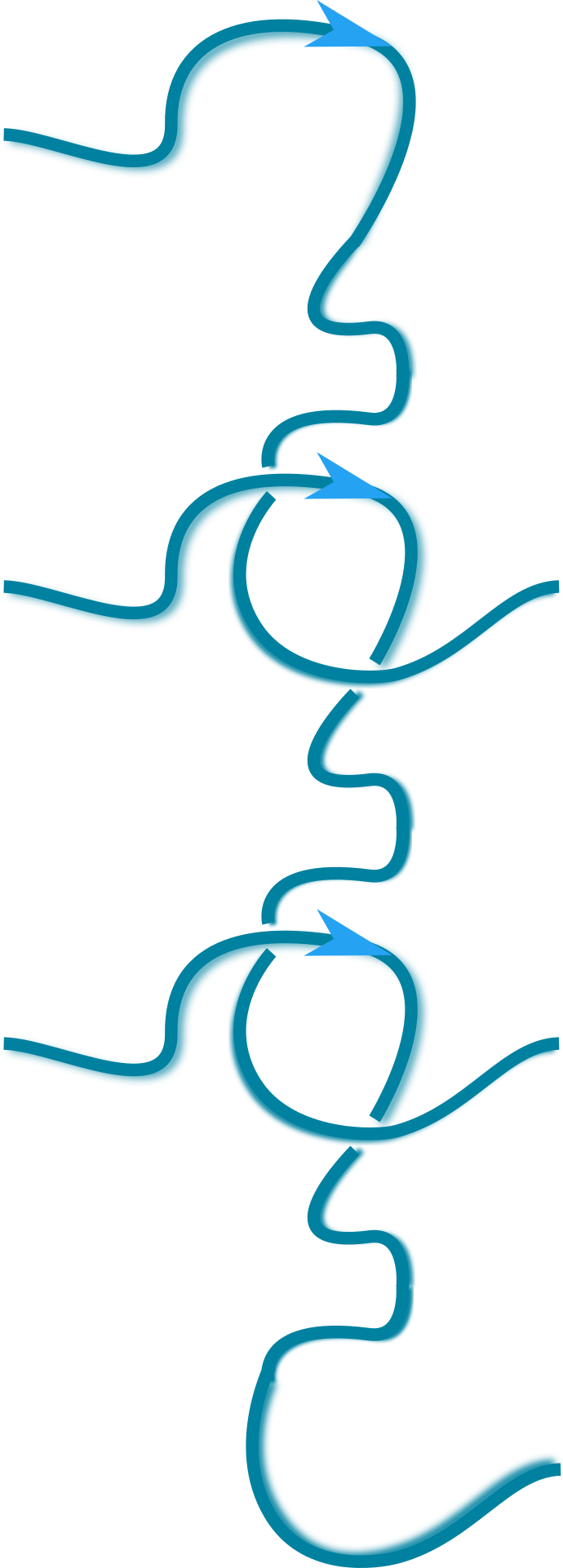}
    \caption{Left : The minimal periodic link containing 8 components of the system in Figure \ref{tex1a} (jersey). Right : The minimal periodic link containing 3 components of the system in Figure \ref{tex1b}(twill).}
    \label{wv_1_mpl}
\end{figure}

The minimal periodic link for the jersey has 8 components, while that of the twill has 3 components. In order to take that into account, the Normalized Periodic Jones polynomials of the jersey and twill systems are computed, in Equations \ref{npjp_j} and \ref{npjp_t}, respectively.

\begin{equation}\displaystyle
\begin{split}
    \mathsf{NV}_P(\mathcal{C}_{jersey}) &= -0.647+0.176A^{2}+0.088A^{4}\\
    & +
    {d}^{-7} (-0.029A^{-68}-0.029A^{-66}-0.118A^{-64}-0.029A^{-62}\\&-0.088A^{-60}-0.059A^{-58}-0.265A^{-56}-0.324A^{-54}+0.059A^{-52}+0.176A^{-50}\\&+0.206A^{-48}-0.235A^{-46}-0.118A^{-44}-1.971A^{-42}-1.176A^{-40}-2.324A^{-38}\\&-5.265A^{-36}+4.529A^{-34}+15.353A^{-32}-11.765A^{-30}-40.412A^{-28}-9.382A^{-26}\\&+39.353A^{-24}+6.147A^{-22}-55.853A^{-20}-39.265A^{-18}+22.412A^{-16}\\&+22.618A^{-14}-23.0A^{-12}-35.147A^{-10}-7.059A^{-8}-9.176A^{-6}\\&+3.235A^{-4}-24.618A^{-2}+3.765-22.382A^{2}+3.824A^{4}-12.353A^{6}\\&+3.529A^{8}-1.824A^{10}+2.118A^{12}).
\end{split}    
\label{npjp_j}
\end{equation}
\begin{equation}\displaystyle
\begin{split}
    \mathsf{NV}_P(\mathcal{C}_{twill}) &= 1.2+1.26A^{2}-0.26A^{4}-0.2A^{6}+0.2A^{8}+0.1A^{10}-0.04A^{12}+0.02A^{16}\\
    & + \quad {d}^{-2} (0.02A^{-6}-1.18A^{-4}-1.14A^{-2}-1.26-1.56A^{2}).
\end{split}        
    \label{npjp_t}
\end{equation}
In the above equations, the span of the remainder polynomials of the Normalized Periodic Jones polynomial of the jersey and the twill systems are $span(r(\mathsf{NV}_P(\mathcal{C}_{jersey})))=80$ and $span(r(\mathsf{NV}_P(\mathcal{C}_{twill})))=8$, respectively. This further supports that the global complexity of the jersey system is higher than that of the twill.
\subsection{Polymer melts of linear chains}
\label{polymer_sys}
In this section, we analyze the multi-chain entanglement of linear FENE polymer chains in a melt in equilibrium, obtained through molecular dynamics simulations employing 3 PBC at temperature $T=1$ and dimensionless density $\rho=0.84$ under a Lennard Jones potential and employing the velocity verlet algorithm. The systems analyzed consist in 100 generating chains in PBC each. The Cell Jones polynomial of such a system would correspond to the Jones polynomial of a link with at least $100$ components and the Periodic Jones polynomial has even more, a computationally expensive computation that goes beyond the scope of this section. We focus instead on a subset of chains in the melt that do not intersect the periodic boundary, for which the Cell Jones polynomial is equal to the Periodic Jones polynomial and the number of components is low. More precisely, we consider subsystems within those polymer melt samples, with $7$ generating chains, such that the chains unfold within the cell. Let us denote the cells $\mathbf{P}_{10}$, $\mathbf{P}_{20}$ and $\mathbf{P}_{30}$ (see Figures \ref{poly10}, \ref{poly20} and \ref{poly30}) such that the molecular weight per chain in the system is $10$, $20$ and $30$, respectively.  Notice that the number of components in these systems are the same, thus there is no reason to examine their normalized polynomials. The Periodic Jones polynomials for these systems and are given in Equations \ref{10cell}, \ref{20cell} and \ref{30cell}.

\begin{figure}[ht!]
    \centering
     \raisebox{0.5 em}{\includegraphics[height=2.2cm,width=2.2cm]{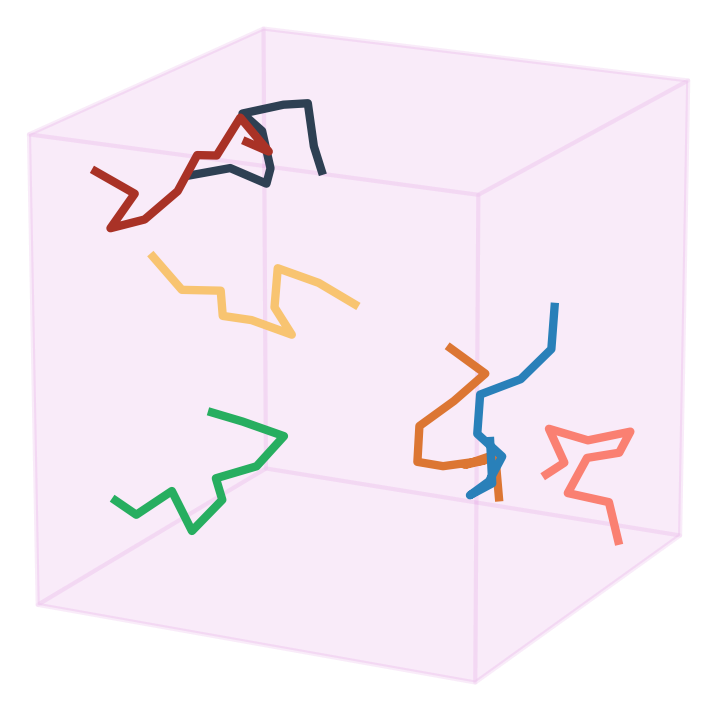}} \quad \includegraphics[scale=0.25]{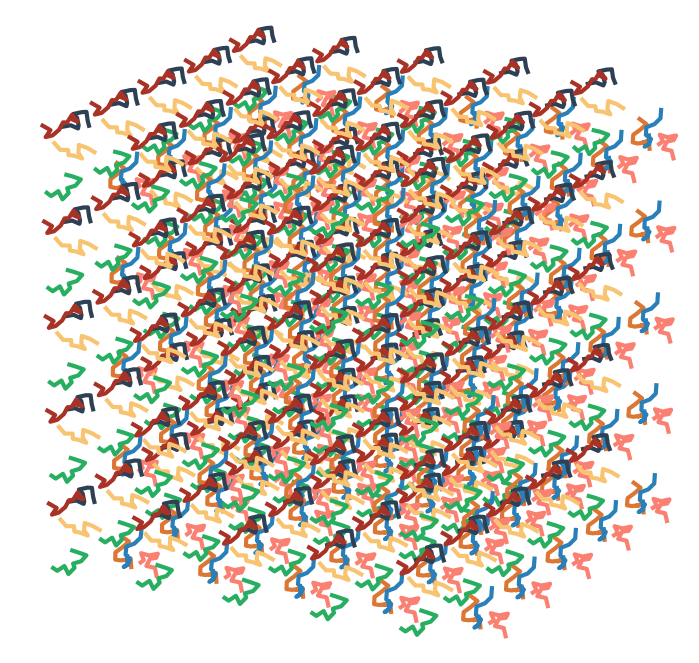}
    \caption{Left: Selected chains in the simulation unit cell of a polymer melt of molecular weight 10 (system $\mathbf{P}_{10}$); Right : The 3 PBC system $\mathbf{P}_{10}$ extended in space.  }
    \label{poly10}
\end{figure}

\begin{figure}[ht!]
    \centering
     \raisebox{0.6 em}{\includegraphics[height=2.5cm,width=2.5cm]{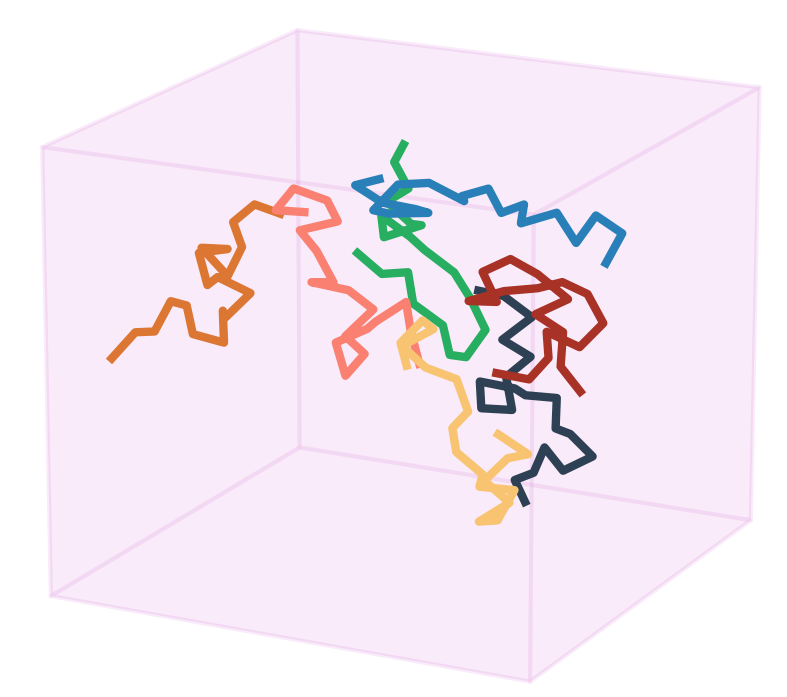}} \quad \includegraphics[scale=0.3]{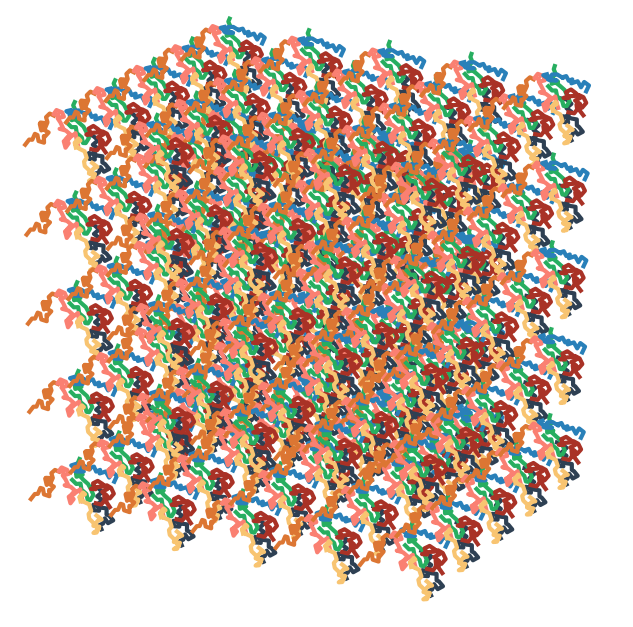}
    \caption{Left: Selected chains in he simulation unit cell of a polymer melt of molecular weight 20  (system $\mathbf{P}_{20}$); Right : The 3 PBC system $\mathbf{P}_{20}$ extended in space.  }
    \label{poly20}
\end{figure}

\begin{figure}[ht!]
    \centering
     \raisebox{0.6 em}{\includegraphics[height=2.5cm,width=2.5cm]{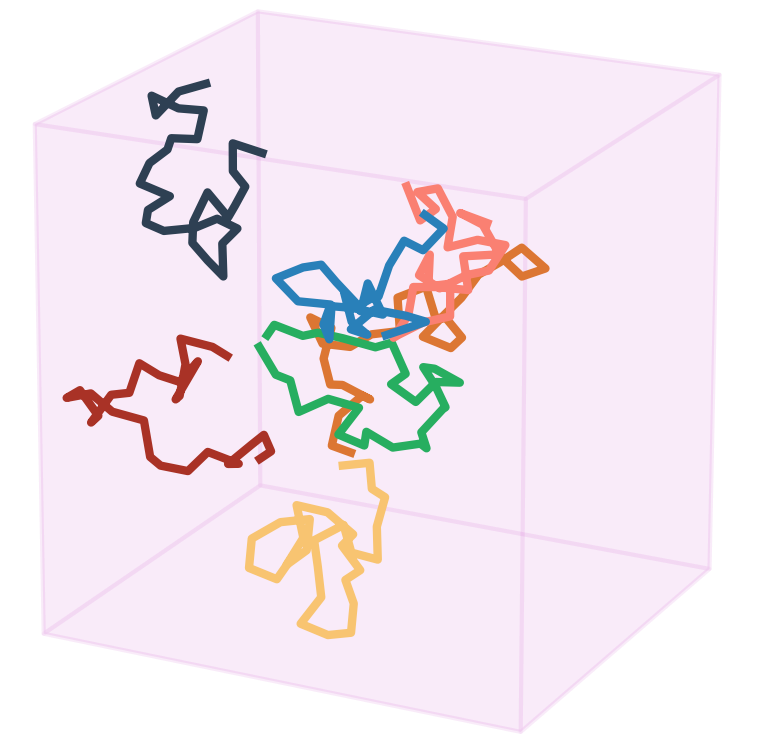}} \quad \includegraphics[scale=0.25]{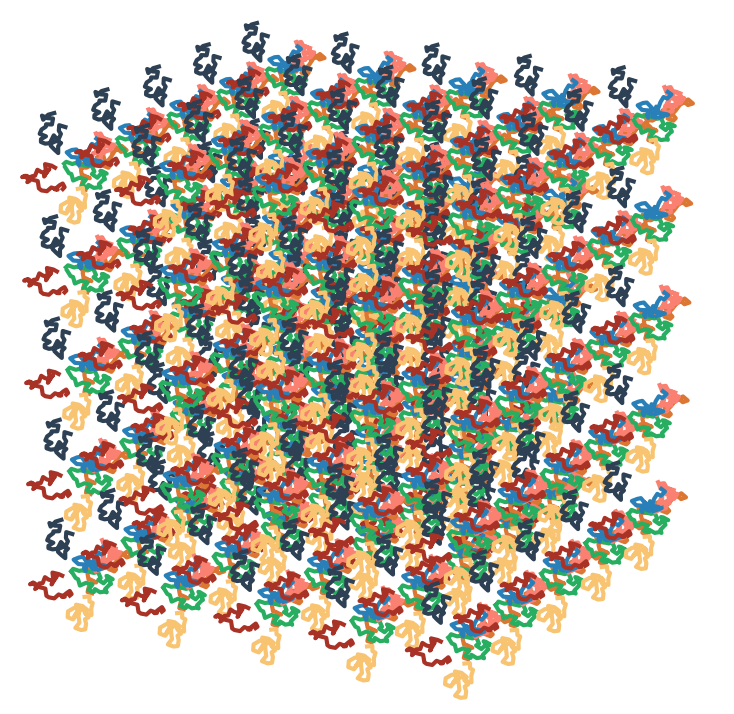}
    \caption{Left: Selected chains in the simulation unit cell of a polymer melt of molecular weight 30 (system $\mathbf{P}_{30}$); Right : The 3 PBC system $\mathbf{P}_{30}$ extended in space.  }
    \label{poly30}
\end{figure}

\begin{equation}
   \begin{split}\displaystyle
    \mathsf{V}_P(\mathbf{P}_{10})&=-0.06A^{-22}-0.34A^{-18}+0.04A^{-16}-0.48A^{-14}+0.88A^{-12}+0.66A^{-10}\\&+4.52A^{-8}+3.08A^{-6}+10.72A^{-4}+4.98A^{-2}+13.92+5.12A^{2}\\&+10.32A^{4}+3.66A^{6}+4.2A^{8}+1.62A^{10}+0.8A^{12}+0.32A^{14}+0.04A^{16}.
    \end{split}
    \label{10cell}
\end{equation}

\begin{equation}
   \begin{split}\displaystyle
    \mathsf{V}_P(\mathbf{P}_{20})&=-0.068A^{-22}-0.364A^{-18}+0.182A^{-16}-0.227A^{-14}+1.614A^{-12}+1.773A^{-10}\\&+5.841A^{-8}+4.545A^{-6}+11.295A^{-4}+5.023A^{-2}+12.818+3.5A^{2}\\&+8.886A^{4}+2.227A^{6}+3.886A^{8}+1.295A^{10}+1.114A^{12}+0.432A^{14}\\&+0.182A^{16}+0.045A^{18}.
    \end{split}
    \label{20cell}
\end{equation}

\begin{equation}
   \begin{split}\displaystyle
    \mathsf{V}_P(\mathbf{P}_{30})&=-0.03A^{-24}-0.03A^{-22}-0.121A^{-20}+0.091A^{-16}+0.545A^{-14}\\&+1.364A^{-12}+1.97A^{-10}+4.515A^{-8}+3.424A^{-6}+9.303A^{-4}+4.242A^{-2}\\&+12.424+4.788A^{2}+10.394A^{4}+4.03A^{6}+5.061A^{8}+1.697A^{10}\\&+1.061A^{12}-0.212A^{16}-0.242A^{18}-0.182A^{20}-0.061A^{22}-0.03A^{24}.
    \end{split}
    \label{30cell}
\end{equation}

\noindent  Since the above polynomials are distinct from one another, we infer that the 3 systems are different. Moreover, $\displaystyle span\left(\mathsf{V}_P(\mathbf{P}_{10})\right) = 38 < span\left(\mathsf{V}_P(\mathbf{P}_{20})\right) = 40 < span\left(\mathsf{V}_P(\mathbf{P}_{30})\right) = 48$,
which suggests an increase in multi-chain topological entanglement with increasing molecular weight. 

\section{The Periodic Jones polynomial at the limit of the infinite periodic system}\label{sec_mpl_inf}

In this section, we discuss to what extent the Periodic Jones polynomial, which is the Jones polynomial of a finite link, captures the global topological complexity present in the infinite periodic system. Notice that since the Jones polynomial of the infinite system diverges, it is meaningful to instead examine the Jones polynomial of a cutoff or arbitrary size of the periodic system. We will show that for a system with 1 closed chain in 1 PBC, generated by a cell $C$, the Periodic Jones polynomial in combination with the periodic linking number fully determines the contribution of one of the states of the Jones polynomial of any (appropriate) cutoff of the infinite periodic system.  More precisely:

\begin{theorem} (Jones polynomial for any cutoff of a system with 1 closed chain in 1 PBC)
Let $C$ be the base cell of a periodic system generated by 1 closed chain in 1 PBC and let $\mathcal{MU}_C$ be its minimal collective unfolding consisting of $|\mathcal{MU}_C|$ copies of $C$ and $\mathcal{L}_C$ its minimal periodic link. Let $\mathcal{N}_{\mathcal{MU}_C}$ be $(2N-1)( |\mathcal{MU}_C|) - (N-1)$ repeated copies of the base cell $C$ (along the PBC faces) and let $(\mathcal{L}_{C})_N$, denote the link composed by all images of $I$ intersecting or contained in $\mathcal{N}_{\mathcal{MU}_C}$.  The Jones polynomial of $(\mathcal{L}_C)_{N}$ can be expressed as
\begin{equation}
    \begin{split}\displaystyle
        \mathsf{V}\left((\mathcal{L}_C)_{N}\right)  &=(-A)^{-(N-1)\mathsf{SLK}_P(\mathcal{L}_C)} d^{N-1} {\mathsf{V}_P(C)}^N + \tilde{\Lambda}\\
    \end{split}
    \label{VHN-exp}
\end{equation}
\noindent where ${\mathsf{V}_P(C)}$ is the Periodic Jones polynomial of $C$, $\mathsf{SLK}_P(\mathcal{L}_C)$ denotes the periodic linking with self images of copies of $\mathcal{L}_C$. The term $(-A)^{-(N-1)\mathsf{SLK}_P(\mathcal{L}_C)} d^{N-1} {\mathsf{V}_P(C)}^N$ is the contribution of one of the states of the Jones polynomial of $(\mathcal{L}_C)_{N}$ which leads to disconnection of all copies of $\mathcal{L}_{C}$ in $(\mathcal{L}_{C})_N$ and the remainder term, $\tilde{\Lambda}$, is contributed by the states which do not lead to disconnection of the $N$ 
 copies of $\mathcal{L}_{C}$ in $(\mathcal{L}_{C})_N$.
\label{corr1PBCjones}
\end{theorem}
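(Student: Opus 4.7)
The plan is to expand the Jones polynomial via the Kauffman bracket state sum of $(\mathcal{L}_C)_N$ and isolate the contribution from the unique resolution of inter-copy crossings that keeps the $N$ translated copies of $\mathcal{L}_C$ spatially separate. First, I would prove a structural lemma: from the dimensions of $\mathcal{MU}_C$ and the definition of an image, the region $\mathcal{N}_{\mathcal{MU}_C}$ is a convex union of $N$ translates of $\mathcal{MU}_C$ (separated by gaps of $|\mathcal{MU}_C|-1$ cells) of total size $(2N-1)|\mathcal{MU}_C|-(N-1)$, and the $N(2|\mathcal{MU}_C|-1)$ images comprising $(\mathcal{L}_C)_N$ partition into $N$ pairwise-disjoint groups, each being a translate of $\mathcal{L}_C$. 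Because non-adjacent translates of $\mathcal{L}_C$ have disjoint spatial supports in the PBC direction, inter-copy crossings occur only between neighboring groups.

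Next I would decompose the writhe as $\mathsf{Wr}((\mathcal{L}_C)_N) = N\,\mathsf{Wr}(\mathcal{L}_C) + W_{\mathrm{inter}}$, and, by the periodic linking convention and translation invariance, obtain $W_{\mathrm{inter}} = 2(N-1)\,\mathsf{SLK}_P(\mathcal{L}_C)$, since each of the $N-1$ neighboring pairs of copies contributes signed inter-crossings equal to $2\,\mathsf{SLK}_P(\mathcal{L}_C)$. Then I would apply the Kauffman bracket skein relation iteratively at every inter-crossing, writing
$$\langle (\mathcal{L}_C)_N \rangle \;=\; A^{\sigma^{*}}\, d^{\,N-1}\, \langle \mathcal{L}_C \rangle^{N} \;+\; \Lambda,$$
where the leading term corresponds to the unique family of resolutions that smooth every inter-crossing in the separating way, producing $N$ spatially disjoint copies of $\mathcal{L}_C$ whose bracket is $d^{N-1}\langle\mathcal{L}_C\rangle^N$ by multiplicativity of the bracket under disjoint union; $\sigma^{*}$ aggregates the $A^{\pm 1}$ coefficients of these separating smoothings, and $\Lambda$ gathers the remaining resolutions, in which at least one inter-crossing fails to disconnect.

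Multiplying by $(-A^3)^{-\mathsf{Wr}((\mathcal{L}_C)_N)}$ and using $\mathsf{V}_P(C)^{N} = (-A^{3})^{-N\mathsf{Wr}(\mathcal{L}_C)}\,\langle \mathcal{L}_C\rangle^{N}$ converts the leading term into $(-A^{3})^{-W_{\mathrm{inter}}}\,A^{\sigma^{*}}\, d^{\,N-1}\,\mathsf{V}_P(C)^{N}$. Substituting the explicit value of $W_{\mathrm{inter}}$ and the value of $\sigma^{*}$ extracted from the sign pattern of the inter-crossings should collapse the prefactor to $(-A)^{-(N-1)\mathsf{SLK}_P(\mathcal{L}_C)}$, and setting $\tilde{\Lambda} := (-A^{3})^{-\mathsf{Wr}((\mathcal{L}_C)_N)}\Lambda$ then yields the stated expansion of $\mathsf{V}((\mathcal{L}_C)_N)$.

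The main obstacle is the combinatorial bookkeeping in this last simplification: one must identify, as a function of the sign of each inter-crossing, which of the $A$- or $B$-smoothings is the separating one, and then verify that the resulting net exponent $\sigma^{*}$ combined with $(-A^3)^{-W_{\mathrm{inter}}}$ collapses exactly to $(-A)^{-(N-1)\mathsf{SLK}_P(\mathcal{L}_C)}$. A secondary technical point is to justify rigorously, from the geometry of the minimal unfolding and the cell count $(2N-1)|\mathcal{MU}_C|-(N-1)$, that no crossings arise between non-adjacent copies of $\mathcal{L}_C$ inside $(\mathcal{L}_C)_N$, so that $W_{\mathrm{inter}}$ genuinely decomposes as a sum over adjacent pairs, each of weight $2\,\mathsf{SLK}_P(\mathcal{L}_C)$.
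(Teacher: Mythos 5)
Your proposal follows essentially the same route as the paper's proof: the same decomposition of $\mathcal{N}_{\mathcal{MU}_C}$ into $N$ translates of $\mathcal{MU}_C$ separated by $|\mathcal{MU}_C|-1$ cells (so that $(\mathcal{L}_C)_N$ splits into $N$ component-disjoint copies of $\mathcal{L}_C$ with shared crossings only between adjacent copies), the same isolation of the unique disconnecting state in the bracket state sum, and the same writhe decomposition and normalization. The only points to reconcile are the bookkeeping you defer, which the paper settles by observing that the disconnecting smoothing at a shared crossing is the orientation-respecting one (contributing $A^{+1}$ or $A^{-1}$ according to the crossing sign, so $\sigma^{*}$ is the signed sum of shared crossings), and your value $W_{\mathrm{inter}}=2(N-1)\,\mathsf{SLK}_P(\mathcal{L}_C)$, which under the paper's convention (where $\mathsf{SLK}_P$ already sums the half algebraic crossing number over both translation directions, so one adjacent pair contributes $\mathsf{SLK}_P$, not $2\,\mathsf{SLK}_P$) should read $(N-1)\,\mathsf{SLK}_P(\mathcal{L}_C)$ in order for the prefactor to collapse to $(-A)^{-(N-1)\mathsf{SLK}_P(\mathcal{L}_C)}$.
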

\begin{proof}
Notice that in $\mathcal{N}_{\mathcal{MU}_C}$, which consists of $(2N-1)( |\mathcal{MU}_C|) - (N-1)$ cells, we can identify exactly $N$ minimal unfoldings (consisting of $|\mathcal{MU}_C|$ cells each) separated by $|\mathcal{MU}_C|-1$ cells (Thus we have $(N-1)(|\mathcal{MU}_C|-1)$ intermediate cells). Each minimal unfolding defines a minimal periodic link, $\displaystyle \mathcal{L}_C$, and, due to the separation between these minimal unfoldings, these minimal periodic links have no common components. Indeed, a component that intersects $\mathcal{MU}_C$ can intersect at most $|\mathcal{MU}_C|-1$ cells not in $\mathcal{MU}_C$. Moreover, the components that belong in these minimal periodic links are exactly all the components intersecting or inside $\mathcal{N}_{\mathcal{MU}_C}$, since any component in the $|\mathcal{MU}_C|-1$ intermediate cells must intersect one of the neighboring $\mathcal{MU}_C$.

Let $\vec{\xi}\in S^2$ and project the periodic system to the plane with normal vector $\vec{\xi}$. Let $c$ denote the crossings in ${(\mathcal{L}_C)}_{\vec{\xi}}$, we call them self-crossings, and let $k$ be the crossings between any consecutive pair of disjoint (in terms of components) copies of $\displaystyle {(\mathcal{L}_C)}_{\vec{\xi}}$, we call them shared crossings.  Then
 $\displaystyle \left \langle {((\mathcal{L}_C)_{N})}_{\vec{\xi}} \right \rangle$ can be expanded as, $\displaystyle \sum_{\mathcal{T}} \langle 
\mathcal{T} \rangle$, where $\mathcal{T}$ is any state obtained by resolving the $(N-1)k$ shared crossings in $\displaystyle {((\mathcal{L}_{C})_{N})}_{\vec{\xi}}$. Any such $\mathcal{T}$ is in fact a knot/link with a total of $\displaystyle Nc$ unresolved crossings.  Among the $2^{(N-1)k}$ such states, there exists a unique state, $\mathcal{S}$, which leads to the disconnection of  the $N$ copies of ${(\mathcal{L}_C)}_{\vec{\xi}}$ in  $\displaystyle {((\mathcal{L}_C)_{N})}_{\vec{\xi}}$. Smoothing along the shared crossings between any two copies of ${(\mathcal{L}_C)}_{\vec{\xi}}$ by respecting orientation, contributes a factor $A$, if the crossing is positive or a factor $A^{-1}$, if the crossing is negative. Thus, disconnecting a copy of  ${(\mathcal{L}_C)}_{\vec{\xi}}$ accumulates a factor $A^\theta$, where $\theta= 2\mathsf{SLK}_P({(\mathcal{L}_C)}_{\vec{\xi}})= 2\sum_{\vec{v}} \mathsf{L}({(\mathcal{L}_C)}_{\vec{\xi}}, {(\mathcal{L}_C)}_{\vec{\xi}} + \vec{v})$ (where $\mathsf{L}$ denotes the half algerbaic sum of shared crossings) the . Therefore, $A^{(N-1)2\mathsf{SLK}_P({(\mathcal{L}_C)}_{\vec{\xi}})}$ is the net smoothing factor associated with the state, $\mathcal{S}$. Then,
\begin{equation}
    \begin{split}\displaystyle
        \left\langle {((\mathcal{L}_C)_{N})}_{\vec{\xi}} \right\rangle  &= A^{2(N-1)\mathsf{SLK}_P({(\mathcal{L}_C)}_{\vec{\xi}})} d^{N-1} \left\langle {(\mathcal{L}_C)}_{\vec{\xi}} \right\rangle^N + \Lambda,
    \end{split}
    \label{eq-theta-lam}
\end{equation}

\noindent where $\Lambda$ is the contribution to the Jones polynomial from the remaining $2^{(N-1)k}-1$ states.

The Jones polynomial is obtained upon the normalisation of Equation \ref{eq-theta-lam} by the diagrammatic writhe of $ {((\mathcal{L}_C)_{(N)})}_{\vec{\xi}}$ :
\begin{equation}
\begin{split}\displaystyle \mathsf{Wr}\left({((\mathcal{L}_C)_{(N)})}_{\vec{\xi}}\right) &= \sum_{i=1}^{N} \mathsf{Wr}\left((\mathcal{L}_C^i)_{\vec{\xi}}\right) + \sum_{i,j=1, i\neq j}^{N-1} \mathsf{L}\left((\mathcal{L}_C^i)_{\vec{\xi}},(\mathcal{L}_C^{j})_{\vec{\xi}}\right)\\&= N \mathsf{Wr}\left({(\mathcal{L}_C)}_{\vec{\xi}}\right) + (N-1) \mathsf{SLK}_P\left({(\mathcal{L}_C)}_{\vec{\xi}}\right),
\end{split}
\label{wrr1}
\end{equation}
where $\mathsf{Wr}\left({(\mathcal{L}_C^i)}_{\vec{\xi}}\right)$ is the diagrammatic writhe of a copy of ${(\mathcal{L}_C)}_{\vec{\xi}}$; $\mathsf{L}\left((\mathcal{L}_C^i)_{\vec{\xi}},(\mathcal{L}_C^{j})_{\vec{\xi}}\right)$ is the diagrammatic linking number between the $i^{th}$ and $j^{th}$ copies of ${(\mathcal{L}_C)}_{\vec{\xi}}$ and $\mathsf{SLK}_P\left({(\mathcal{L}_C)}_{\vec{\xi}}\right)$ is the periodic linking  with self images of ${(\mathcal{L}_C)}_{\vec{\xi}}$ (accounting only shared crossings). The expression for the Jones polynomial of $(\mathcal{L}_C)_{N}$ is then 

\begin{equation}
    \begin{split}\displaystyle
        \mathsf{V}\left((\mathcal{L}_C)_{N}\right) &= {(-A^3)}^{-\mathsf{Wr}\left({((\mathcal{L}_C)_{N})}_{\vec{\xi}}\right)} \left\langle {((\mathcal{L}_C)_{N})}_{\vec{\xi}} \right\rangle\\
         &=(-A)^{-(N-1)\mathsf{SLK}_P\left({(\mathcal{L}_C)}_{\vec{\xi}}\right)} d^{N-1} {\mathsf{V}\left({(\mathcal{L}_C)}_{\vec{\xi}}\right)}^N + \tilde{\Lambda}\\
        &=(-A)^{-(N-1)\mathsf{SLK}_P\left({(\mathcal{L}_C)}\right)} d^{N-1} {\mathsf{V}\left({(\mathcal{L}_C)}\right)}^N + \tilde{\Lambda},\\
    \end{split}
\end{equation}
where $\displaystyle \tilde{\Lambda} = \Lambda {(-A^3)}^{-\mathsf{Wr}\left({((\mathcal{L}_C)_{N})}_{\vec{\xi}}\right)}$. 
\end{proof}

\begin{remark}\label{rem-23pbc-more}
Theorem \ref{corr1PBCjones} can be extended to systems involving more closed generating chains and/or more PBC, with some modifications. The Periodic Jones polynomial and the Periodic Linking Number again contribute to the Jones polynomial of one state of the $N$th cutoff, but with an extra term that is a polynomial of a smaller link than the minimal periodic link. More precisely, for a system of closed chains with 2 PBC (resp. 3 PBC), $\mathcal{N}_{\mathcal{MU}_C}$ can be defined so as to contain $N^2$ (resp. $N^3|\mathcal{MU}_C|$) minimal collective unfoldings separated by smaller collections of intermediate cells. In this case, the chains that intersect $\mathcal{N}_{\mathcal{MU}_C}$ are $N^2$ (resp. $N^3$) copies of $\mathcal{L}_C$, but also some chains that completely unfold in the intermediate cells. By resolving the shared crossings between every $\mathcal{L}_C$ and the rest of the chains, a state $\mathcal{S}$ can be obtained which, is a disjoint union of $N^2$ (resp. $N^3$) copies of $\mathcal{L}_C$ and a link consisting of the images of chain(s) which are not part of any copy of $\mathcal{L}_C$. This remainder link is smaller than the minimal periodic link and thus can be expected to contribute a simpler polynomial factor in the Jones polynomial of that state.
\end{remark}

\begin{remark}
Theorem \ref{corr1PBCjones} does not hold for systems involving open or infinite curves in PBC, unless if we make a convention discussed below. This is because, unlike systems generated by closed chains, the result depends on all the projection directions used for the computation of the polynomial. For every two minimal periodic links, there is always a projection direction where they overlap and, even if one is completely over the other, their Jones polynomial cannot be expressed as that of the product of two disjoint components. The theorem would hold if we allowed this property for linkoids.
\end{remark}

\section{Conclusions}\label{sec_conc}
In this manuscript, a generalization of the Jones polynomial, the Periodic Jones polynomial, is introduced to measure topological entanglement in collections of physical filaments with periodicity. The Periodic Jones polynomial measures the topological complexity of a finite link in the system which is minimal in the number of components, but captures the total entanglement imposed on an image of each chain of the system. By being defined on the periodic system in 3-space, rather than an identification space, the Periodic Jones polynomial is well defined for both open and closed curves and it is a continuous function of the curve coordinates or a topological invariant, respectively, almost everywhere. For the special case of closed chains in 1 PBC, it is proved that the Periodic Jones polynomial is a repetitive factor, up to a remainder, in the Jones polynomial of any (appropriately chosen) cutoff of arbitrary size of the infinite system. More precisely, it is proved that the Periodic Jones polynomial and the Periodic Linking number \cite{Panagiotou2015} fully determine the contribution of one of the states of the Jones polynomial of any cutoff of the periodic system. 

The Cell Jones polynomial is also introduced and discussed as a method to measure collective local entanglement in a periodic system that is computationally less expensive and can capture the entanglement present within a unit cell.

Two examples of periodic systems and their Periodic Jones polynomial and the Cell Jones polynomial are presented. Namely, the Periodic Jones polynomial is computed for 3D realizations of doubly-periodic textile patterns and systems of linear polymers in a melt obtained from molecular dynamic simulations employing PBC. Our results show that the Periodic Jones polynomial can be used to compare textile patterns and classify them according to topological complexity. In the context of polymers, it is shown that the Periodic Jones polynomial captures increasing collective periodic entanglement in polymers with increasing molecular weight. All these results point to how the Periodic Jones polynomial is a useful topological parameter of an infinite periodic system of entangled filaments.
 
\section{Acknowledgements}
Kasturi Barkataki and Eleni Panagiotou were supported by NSF (Grant No. DMS-1913180 and NSF CAREER 2047587).

\bibliographystyle{plain}
\bibliography{paperDatabase}
\end{document}